\newcommand{\setof}[1]{\ensuremath{\left \{ #1 \right \}}}
\DeclareFontFamily{OT1}{rsfs}{}
\DeclareFontShape{OT1}{rsfs}{n}{it}{<-> rsfs10}{}
\DeclareMathAlphabet{\mathscr}{OT1}{rsfs}{n}{it}
\theoremstyle{plain}
  \newtheorem{theorem}{Theorem}
  \newtheorem{conjecture}{Conjecture}
  \newtheorem{lemma}{Lemma}
  \newtheorem*{rh}{Riemann hypothesis}
\theoremstyle{definition}
\theoremstyle{remark}
\numberwithin{equation}{section}
\title{On the Riemann Hypothesis and Hilbert's Tenth Problem}
\author{Aran Nayebi \\ Written: February 2012}
\email{anayebi@stanford.edu}
\urladdr{http://www.stanford.edu/~anayebi}
\subjclass[2010]{Primary 11M26; Secondary 03B70, 03D35}
\keywords{Riemann hypothesis, Diophantine equation}
\begin{document}
\begin{abstract}
The Riemann hypothesis is one of the most famous unresolved problems in modern mathematics. 
The discussion here will present an overview of past methods that prove the Riemann hypothesis is a $\Pi_1^0$ sentence.
We also end with some attempts towards showing the Elliott-Halberstam conjecture is $\Pi_1^0$.
\end{abstract}

\maketitle
\tableofcontents

\section{Introduction}
The Riemann zeta function is the function of the complex variable $s$ defined in the half-plane $\Re(s) > 1$ by the sum of the infinite series
\begin{equation*}
\zeta(s) := \sum_{n = 1}^{\infty}\frac{1}{n^s}.
\end{equation*}
By analytic continuation, we can define the zeta function on the entire complex plane $\mathbb{C}$ except at $s = 1$ where $\zeta(s)$ has its unique pole. In his seminal 1859 memoir entitled ``On the number of primes less than a given magnitude'', Riemann \cite{riemann} derives an analytic formula for the distribution of prime numbers expressed in terms of the zeros of $\zeta(s)$. The function $\zeta(s)$ has as \emph{trivial zeros} the negative even integers -2, -4, $\ldots$ and as \emph{nontrivial zeros} the complex numbers $\frac{1}{2} + i\beta$, where $\beta$ is a zero of the even entire function $\xi(t)$ of the complex variable $t$ defined (by Riemann) as
\begin{equation*}
\xi(t) := \frac{1}{2}s(s-1)\pi^{-s/2}\Gamma\left(\frac{s}{2}\right)\zeta(s),
\end{equation*}
where $s = \frac{1}{2} + it$ and, as usual, $\Gamma(s)$ is the gamma function. The zeros of $\xi(t)$ have imaginary part between $-i/2$ and $i/2$. The Riemann hypothesis is the statement that all zeros of $\xi(t)$ are real, or equivalently,
\begin{rh}
The nontrivial zeros of $\zeta(s)$ have real part of $\frac{1}{2}$.
\end{rh}
\begin{figure}
\includegraphics[scale=0.7]{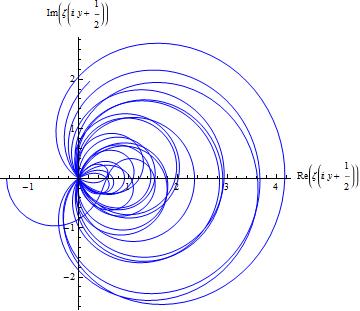}
\caption{Plot of the real and imaginary parts of $\zeta(s)$ along the critical line $\Re(s) = \frac{1}{2}$ as $y$ is varied varied from 0 to 80.}
\label{figure1}
\end{figure}
From a first glance, it may seem that the Riemann hypothesis is only an interesting property of $\zeta(s)$, and one may be led to think that even Riemann himself took that view, for he wrote ``Without doubt it would be desirable to have a rigorous proof of this proposition; however I have left this research aside for the time being after some quick unsuccessful attempts, because it appears to be unnecessary for the immediate goal of my study'' \cite[pp. 3-4]{bombieri2}. However, one should not conclude that the Riemann hypothesis was of only minor interest to him, since from his memoir it is quite likely that he was aware of the importance of his hypothesis in approximating the distribution of prime numbers, as we will explain below. \newline
\indent The Riemann hypothesis has many startling consequences for analytic number theory.  If we let $\pi(x)$ denote the number of primes $\le x$ which is well approximated by
\begin{equation*}
\operatorname{Li}(x) = \int_2^x \frac{dt}{\log t},
\end{equation*}
then the Riemann hypothesis is equivalent to saying that
\begin{equation*}
\pi(x) = \operatorname{Li}(x) + O(x^{1/2}\log x).
\end{equation*}
The error term is the best possible in this case since it is known to oscillate in both directions to an order of at least $\text{Li}\left(x^{1/2}\right)\log^{(3)} x$, where $\log^{(3)} x$ is the three-fold iterated logarithm.\footnote{Von Koch \cite{koch} in fact proved that the Riemann hypothesis is the best possible bound for the error of the prime number theorem (which states that $\pi(x) \sim \frac{x}{\log x}$).} A more precise result due to Schoenfeld \cite{schoenfeld} is that the Riemann hypothesis is equivalent to
\begin{equation*}
|\pi(x) - \operatorname{Li}(x)| < \frac{1}{8\pi}x^{1/2}\log x,
\end{equation*}
for all $x \ge 2657$. The Riemann hypothesis is one of the main open questions in prime number theory due to the fact that its failure would wreak havoc in the distribution of prime numbers \cite{bombieri2}. Moreover, it has become a central problem of pure mathematics as well, for instance due to its consequences for global $L$-functions. A discussion of these functions is beyond the scope of this article; however, the reader is referred to \S 2 of Bombieri's \cite{bombieri2} for an informal yet informative treatment of the topic. \newline
\indent Our interest in the Riemann hypothesis stems from the consequences of the negative solution to Hilbert's tenth problem by Davis, Putnam, and Robinson \cite{dpr1961}, and subsequently by Matiyasevich \cite{mat1970}. In 1900, listed as problem 10 of his famous 23 problems\footnote{It is of relevance to point out that the Riemann hypothesis is part of Hilbert's eighth problem.}, the eminent mathematician Hilbert asked,
\begin{center}
``Given a diophantine equation with any number of unknown quantities and with rational integral numerical coefficients: \emph{To devise a process according to which it can be determined by a finite number of operations whether the equation is solvable in rational integers}'' \cite[pg. 458]{hilbert}.
\end{center}
The term `rational integers' here simply refers to the integers, and the words `process' and `finite number of operations' refers to the modern notion of an algorithm.\footnote{The formalization of the concept of the algorithm began with the 1931 incompleteness theorems of G\"{o}del \cite{godel1931} regarding the unsolvability of the \emph{Entscheidungsproblem} (`decision problem'). Subsequent formalizations were centered upon defining `effective calculability', which included the recursive functions of Herbrand \cite{herbrand1931} and G\"{o}del \cite{godel1934}, Church's $\lambda$-calculus \cite{church}, Post's `Formulation 1' \cite{post}, and Turing's \emph{a}-machines \cite{turing}.} In clearer terms, Hilbert was essentially asking for a general algorithm to decide whether a given diophantine equation with integer coefficients has a solution in integers. The negative solution to this problem is commonly referred to as the DPRM theorem\footnote{This result is either referred to as the MRDP (Matiyasevich-Robinson-Davis-Putnam) theorem or the DPRM (Davis-Putnam-Robinson-Matiyasevich) theorem (Matiyasevich favors the latter and Davis the former \cite{jackson}). To preserve the historical integrity of the order of succession of results that led to the negative solution to Hilbert's tenth problem, we will use the latter naming convention in this article.}, and formally states that
\begin{theorem}[DPRM theorem]
Every recursively enumerable set is diophantine.\footnote{Equivalently, by the DPRM theorem, it follows that every $\Pi_1^0$ proposition is equivalent to the unsolvability of a particular diophantine equation.}
\end{theorem}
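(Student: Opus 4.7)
The plan is to decompose the DPRM theorem into the three historical stages and to follow the strategy of Davis, Putnam, Robinson, and Matiyasevich. The easy direction (every diophantine set is recursively enumerable) is immediate: given a defining polynomial $P(\vec x,\vec y)$, a Turing machine can systematically enumerate tuples $(\vec a,\vec b) \in \N^{k+m}$ and output $\vec a$ whenever $P(\vec a,\vec b)=0$. For the hard direction, I would proceed in three stages, which I describe next.

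First, I would establish Davis's normal form theorem: every recursively enumerable set $S \subseteq \N^k$ can be written as
\[
 S = \{\vec a : \exists y\, \forall k \le y\, \exists x_1, \dots, x_n\, [\,P(\vec a, y, k, x_1, \dots, x_n) = 0\,]\},
\]
with a single bounded universal quantifier over a polynomial equation. This is a routine reduction using Kleene's $T$-predicate, arithmetization of Turing machine computations, and standard pairing tricks. Second, I would prove the Davis--Putnam--Robinson theorem, which eliminates the bounded universal quantifier at the cost of introducing exponentiation, showing that $S$ is \emph{exponentially diophantine} --- definable by an existential formula built from polynomials and the function $a^b$. The central device here is the replacement of the bounded $\forall$ by a large product, which one then encodes via the Chinese Remainder Theorem together with a G\"odel-style $\beta$-function for finite sequences; the Dirichlet-theorem ingredient in the original argument can be avoided following Robinson's later simplification, and all of the transcendental arithmetic needed is captured by a single instance of exponentiation.

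The main obstacle, and the deepest step, is stage three: showing that the graph $\{(a,b,c) : c = a^b\}$ of the exponential function is itself diophantine. Julia Robinson had reduced this to what is now called the \emph{J.R. hypothesis}: the existence of a single diophantine relation $J(u,v)$ of ``roughly exponential growth,'' meaning that $J(u,v)$ forces $v \le u^u$, while for every $k$ some $(u,v)$ with $J(u,v)$ and $v \ge u^k$ exists. My plan is to follow Matiyasevich's 1970 breakthrough and verify the J.R. hypothesis explicitly by proving that the relation $v = F_{2u}$, where $F_n$ denotes the $n$-th Fibonacci number, is diophantine. The proof exploits the second-order divisibility properties $F_m \mid F_n \Leftrightarrow m \mid n$ and $F_m^2 \mid F_n \Leftrightarrow m F_m \mid n$, which can be encoded using solutions to a family of Pell-like equations $x^2 - (a^2-1)y^2 = 1$. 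Once the J.R. hypothesis is established, Robinson's argument upgrades it to a genuine diophantine definition of $c = a^b$; substituting this into the exponentially diophantine definition from stage two produces an honest polynomial diophantine definition of $S$, completing the proof. The hard part is not any single computation but the cumulative arithmetic ingenuity required to bootstrap from the Pell equation up to a diophantine analogue of the exponential function --- precisely the step that had resisted all pre-Matiyasevich attempts.
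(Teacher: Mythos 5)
The paper does not prove this theorem at all: it is stated as the celebrated background result of \cite{dpr1961} and \cite{mat1970}, on which the rest of the article rests, so there is no internal proof to compare against. Your outline is the correct and standard decomposition of that cited proof: the easy direction by enumeration; Davis normal form with a single bounded universal quantifier via arithmetization of Turing computations; the Davis--Putnam--Robinson elimination of that quantifier at the cost of exponentiation (G\"odel $\beta$-function, Chinese Remainder Theorem, with Robinson's later removal of the dependence on primes in arithmetic progressions); and finally Matiyasevich's verification of the Julia Robinson hypothesis by showing $v = F_{2u}$ is diophantine, using the key divisibility law $F_m^2 \mid F_n \Leftrightarrow mF_m \mid n$ and Pell-equation machinery, after which Robinson's reduction yields a diophantine definition of $c = a^b$. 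This is faithful to the historical argument and to the sources the paper cites.

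That said, be aware that what you have written is a roadmap, not a proof: every genuinely difficult step is described rather than carried out. In particular, stage three requires exhibiting an explicit system of diophantine conditions characterizing $v = F_{2u}$ (or, in the variant the paper alludes to in its final section, characterizing $\psi_a(n)$ from the Pell equation $x^2 - (a^2-1)y^2 = 1$) and verifying the second-order divisibility lemmas; and Robinson's upgrade from a relation of roughly exponential growth to the full graph of exponentiation is itself a substantial construction. None of these can be waved through, and a referee would require them before accepting the argument as a proof rather than a correct survey of one. If your intent is only to justify the theorem's use in this paper, the citation suffices; if you intend to supply a proof, stages two and three each need several pages of explicit arithmetic.
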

Since there exist recursively enumerable sets which are not computable, then it is an immediate consequence that Hilbert's tenth problem is unsolvable over $\mathbb{Z}$. Therefore, a Turing machine cannot decide whether a given diophantine equation has solutions or not over $\mathbb{Z}$.\footnote{The analog of Hilbert's tenth problem over $\mathbb{Q}$ has still not been proven, although Poonen \cite{poonen} poses a conjecture, that if true, would imply the unsolvability of Hilbert's tenth problem over $\mathbb{Q}$. Furthermore, the unsolvability of Hilbert's tenth problem has been proven by several authors for many rings. A recent paper by Mazur and Rubin \cite{mazur-rubin} shows that conditional on the 2-primary part of the Shafarevich-Tate conjecture, Hilbert's tenth problem is unsolvable over the ring of integers of \emph{every} number field. Shlapentokh \cite{s-2011} provides an excellent survey of the developments arising from the attempts to solve the analog of Hilbert's tenth problem for the field of rational numbers and the rings of integers of number fields.} \newline
\indent Ultimately, the DPRM theorem bridges computability theory and number theory (this relationship, for instance, is made incredibly explicit by the nondeterministic diophantine machine (NDDM) of Adleman and Manders \cite{adleman-manders}),  and since recursively enumerable sets are diophantine, it is well-known many open problems in number theory can be expressed as the assertion that a particular diophantine equation has no solution in nonnegative integers. The Riemann hypothesis is one of these problems, and there have been several reductions of the Riemann hypothesis to a system of diophantine equations (and therefore to a single polynomial that is unsolvable in nonnegative integers if the Riemann hypothesis is true, and solvable if otherwise). The first formulation \cite{dmr1976} is based on a decidability condition that considers the behavior of a Cauchy integral on a path around the zeros of the Riemann zeta function and involves the computable function $\delta(x)$ defined for positive integers as such
\begin{equation*}
\delta(x) := \prod_{n < x} \prod_{j \le n} \eta(j),
\end{equation*}
where $\eta(j) = 1$ unless $j$ is a prime power, and $\eta(p^k) = p$. The second, much simpler formulation \cite{mat-book} utilizes the relation
\begin{equation*}
\operatorname{explog}(a,b) \Leftrightarrow \exists x \setof{x > b + 1\mbox{           }\&\mbox{           }\left(1 + \frac{1}{x}\right)^{xb} \le a + 1 < 4 \left(1 + \frac{1}{x}\right)^{xb}}.
\end{equation*}
Essentially, it is possible to write a computer program to search for a counterexample to the Riemann hypothesis, and this program will halt with a counterexample if and only if a certain diophantine equation has a solution.\footnote{Of course, the diophantine representations of these famous problems are not currently considered a \emph{viable} approach towards a proof.} \newline
\indent This article will proceed as follows. In \S 2 we will discuss in detail both approaches to the reduction of the Riemann hypothesis to a system of diophantine equations. And in \S 3 we will discuss two significant open problems in number theory that have not yet been proven to be $\Pi_1^0$ propositions.

\section{Proof that the Riemann hypothesis is a $\Pi_1^0$ proposition}
The key step in proving that the Riemann hypothesis is a $\Pi_1^0$ proposition is to prove a decidable property (this is a nontrivial task) which can then be written in terms of system of diophantine equations, from which all that remains is to combine these equations into a single equation (which is easily done either by the simple sum of squares method or, to minimize the number of variables, by the more specialized Matiyasevich-Robinson relation-combining theorem \cite{mat-rob1975}). The reason why the decidable property is essential is that if we have an algorithm that tells us in a finite number of steps whether an arbitrary natural number has a particular property $P$, then we can reduce the assertion that every natural number has the property $P$ to the unsolvability of a particular diophantine equation which we can explicitly write down. Hence, when discussing the two approaches to proving that the Riemann hypothesis is $\Pi_1^0$, we will focus on the derivation of the decidable property, rather than the explicit system of equations that is produced. And in the case of the Riemann hypothesis, it is not immediately evident that it can be formulated in the desired form $\forall n$ $P(n)$.
\subsection{Davis, Matiyasevich, and Robinson (1976)}
As mentioned before, Davis, Matiyasevich, and Robinson's \cite{dmr1976} approach to establishing a decidable property involves the computable function $\delta(x)$ defined for positive integers as such
\begin{equation*}
\delta(x) := \prod_{n < x} \prod_{j \le n} \eta(j),
\end{equation*}
where $\eta(j) = 1$ unless $j$ is a prime power, and $\eta(p^k) = p$. From which they proved the following decidable property
\begin{theorem}[Davis, Matiyasevich, and Robinson]\label{dp1}
The Riemann hypothesis is equivalent to the assertion $($for $n = 1, 2, 3, \ldots)$ that
\begin{equation} \label{r1}
\left(\sum_{k \le \delta(n)}\frac{1}{k} - \frac{n^2}{2}\right)^2 < 36n^3.
\end{equation} 
\end{theorem}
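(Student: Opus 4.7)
The plan is to convert both sides of the equivalence into statements about Chebyshev's function $\psi(x):=\sum_{p^k\le x}\log p$, and then match error terms using the explicit formula and known equivalents of RH. The inner product $\prod_{j\le n}\eta(j)$ picks up each prime $p$ exactly $\lfloor\log_p n\rfloor$ times, so it equals $\mathrm{lcm}(1,\dots,n)$ with logarithm $\psi(n)$. A short Abel summation then gives $\log\delta(n)=\sum_{m=1}^{n-1}\psi(m)=\psi(n-1)+\int_0^{n-1}\psi(t)\,dt$. Combined with the harmonic asymptotic $H_N=\log N+\gamma+O(1/N)$ at $N=\delta(n)$, inequality (\ref{r1}) reduces to
\[
\Bigl|\bigl(\psi(n-1)-(n-1)\bigr)+\int_0^{n-1}\bigl(\psi(t)-t\bigr)\,dt + C(n)\Bigr| < 6 n^{3/2},
\]
with $C(n)=\gamma-\tfrac12+O(1/\delta(n))$ uniformly bounded in $n$.

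For the forward direction, assume RH. Schoenfeld's explicit bound $|\psi(x)-x|<\sqrt{x}\log^2 x/(8\pi)$ for $x\ge 73.2$ controls $|\psi(n-1)-(n-1)|$, which is $o(n^{3/2})$. For the integral, I would apply the explicit formula $\psi(t)-t=-\sum_\rho t^\rho/\rho+O(\log t)$ and integrate termwise to obtain $\int_0^{n-1}(\psi(t)-t)\,dt = -\sum_\rho(n-1)^{\rho+1}/\bigl(\rho(\rho+1)\bigr)+O(\log n)$. Under RH each summand is bounded by $n^{3/2}/|\rho(\rho+1)|$, and $\sum_\rho 1/|\rho(\rho+1)|\le\sum_\rho|\rho|^{-2}<\infty$ converges absolutely by the classical von Mangoldt zero-density estimate. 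Choosing the constant $36$ with enough margin over these explicit sums, and checking the finitely many small $n$ directly, then yields (\ref{r1}).

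Conversely, if RH fails there is a non-trivial zero $\rho_0$ with $\Theta:=\Re(\rho_0)>\tfrac12$. A Landau $\Omega$-theorem applied to the antiderivative forces $\int_0^{n-1}(\psi(t)-t)\,dt$ to be $\Omega_\pm(n^{\Theta+1})$, which exceeds $6n^{3/2}$ for all sufficiently large $n$ and thereby violates (\ref{r1}).

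The main obstacle is making every constant explicit enough that $36$ provably works for every $n\ge 1$: one must numerically bound $\sum_\rho 1/|\rho(\rho+1)|$ alongside the $O(\log n)$ tail from the explicit formula and the Schoenfeld remainder, and then verify the small-$n$ cases by direct computation (the asymptotic inputs are loose precisely where the $6n^{3/2}$ margin is tightest). A subtler difficulty in the converse is that Landau-type $\Omega$-results for $\psi$ itself do not transfer mechanically to its antiderivative; one must rule out the possibility that oscillatory contributions of distinct zeros on the critical strip conspire to shrink the integral below the $n^{\Theta+1}$ benchmark.
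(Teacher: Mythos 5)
Your overall strategy for the forward direction --- pass to the integrated Chebyshev function, apply the explicit formula, and bound the resulting sum over zeros --- is the same as the paper's, but you stop exactly where the theorem actually lives. Proving that $36$ works requires an \emph{explicit numerical} bound on $\sum_{\rho}1/(|\rho|\cdot|\rho+1|)$, and "converges absolutely by the classical zero-density estimate\ldots choose $36$ with enough margin" does not supply one; you flag this yourself as the main obstacle but leave it unresolved. The paper closes this gap with Lemma~\ref{l6}: under RH one has $|\rho|^2=\rho\bar{\rho}=\rho(1-\rho)$, and the sum $\sum_{\rho}1/(\rho(1-\rho))$ is evaluated \emph{exactly} as $\gamma+2-\log 4\pi\approx 0.046$ by taking $s\to 1$ in the partial-fraction expansion of $\zeta'/\zeta$ (Lemmas~\ref{l1} and~\ref{l2}). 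Together with $\zeta'(0)/\zeta(0)=\log 2\pi$, $\zeta'(-1)/\zeta(-1)$, and $\sum_{r\ge 1}1/(2r(2r-1))=\log 2$, this gives $|\psi_1(x)-x^2/2|\le(4+\gamma)x^{3/2}<5x^{3/2}$ with no asymptotics, no Schoenfeld, and no small-$n$ case check. Note also that the reason the theorem is phrased via $\delta(n)$ (i.e.\ via $\psi_1$) is precisely that the explicit formula for $\psi_1$ has the absolutely convergent zero sum $\sum_\rho x^{\rho+1}/(\rho(\rho+1))$; invoking the conditionally convergent formula for $\psi(t)$ and integrating termwise, as you propose, reintroduces convergence issues the construction was designed to avoid.

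The converse is a genuine gap. You propose a Landau $\Omega_{\pm}$-theorem for the antiderivative and then correctly observe that you cannot rule out cancellation among the contributions of distinct zeros --- so your own argument, as stated, does not close. The paper takes a different and cleaner route (Ingham's): from the hypothesis one gets $|\psi_1(x)-x^2/2|<9x^{3/2}$ for all real $x\ge 1$, hence the Mellin integral $s(s+1)\int_1^\infty(\psi_1(x)-\tfrac{x^2}{2})x^{-s-2}\,dx$ converges uniformly on $\Re(s)>\sigma_0$ for every $\sigma_0>1/2$ and defines an analytic function there. Since this integral equals $-\zeta'(s)/\zeta(s)-s(s+1)/(2(s-1))$ for $\Re(s)>1$, analytic continuation shows $\zeta'/\zeta$ has no poles in $1/2<\Re(s)<1$, i.e.\ $\zeta$ does not vanish there. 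This argument never isolates a single zero, so the "conspiracy" problem you worry about simply does not arise. If you want to salvage your version, the correct tool is Landau's theorem on the abscissa of convergence of a Mellin transform with eventually one-signed integrand applied to this same integral --- which is really the contrapositive of the paper's argument --- rather than an $\Omega$-result transferred from $\psi$ to $\psi_1$.
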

It is rather simple, using the methods of Matiyasevich \cite{mat1970}, to obtain an explicit diophantine equation which has no solution in case the decidable property above holds. Furthermore, the authors remark that the constant 36 can be readily improved. Indeed, after we present the proof of Theorem ~\ref{dp1}, it will be a trivial observation to the reader that one can obtain the slightly improved bound
\begin{equation} \label{1}
\left(\sum_{k \le \delta(n)}\frac{1}{k} - \frac{n^2}{2}\right)^2 < (\gamma^2 + 8\gamma + 16)n^3 + (2\gamma + 8)n^{3/2} + 1,
\end{equation}
where $\gamma = 0.577\ldots$ is the Euler-Mascheroni constant. Note that the system of diophantine equations cannot contain $\gamma$ because it cannot be written explicitly. A bound to Theorem~\ref{dp1} which does not involve $\gamma$ but obviously weaker than the one presented in \eqref{1} is
\begin{equation} \label{2}
\left(\sum_{k \le \delta(n)}\frac{1}{k} - \frac{n^2}{2}\right)^2 < 25n^3 + 10n^{3/2} + 1.
\end{equation}
Of course, neither \eqref{1} or \eqref{2} are intended to be \emph{practical} replacements to the bound given in Theorem ~\ref{dp1}; we simply presented them to affirm that it is an easy task, as Davis, Matiyasevich, and Robinson pointed out, to improve the bound in \eqref{r1}. \newline
\indent Setting child's play aside, we proceed to present the proof of Theorem ~\ref{dp1}. First, the following basic facts (from number theory) must be stated, which will be used in the proof, and therefore we will present these as lemmas without proof (the references that contain their proofs can be found on page 335 of \cite{dmr1976} in the same order as they are listed here).
\begin{lemma} \label{l1}
It is well-known that
\begin{equation*}
\frac{\Gamma^{'}(3/2)}{\Gamma(3/2)} =  2 - \gamma - \log 4.
\end{equation*}
\end{lemma}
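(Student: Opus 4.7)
The plan is to introduce the digamma function $\psi(x) := \Gamma'(x)/\Gamma(x)$, so that the left-hand side equals $\psi(3/2)$, and then reduce the problem to the standard special value $\psi(1/2) = -\gamma - \log 4$. The two ingredients I would use are the functional equation of $\Gamma$ and the Weierstrass product; both are classical and independent of anything else in this paper.

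First, I would differentiate $\log \Gamma(x+1) = \log x + \log \Gamma(x)$ to obtain the recurrence $\psi(x+1) = \psi(x) + 1/x$. Applied at $x = 1/2$ this gives $\psi(3/2) = \psi(1/2) + 2$, so it suffices to establish $\psi(1/2) = -\gamma - \log 4$.

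Second, I would derive the series representation of $\psi$ from the Weierstrass product
\begin{equation*}
\frac{1}{\Gamma(x)} = x\,e^{\gamma x}\prod_{n = 1}^{\infty}\left(1 + \frac{x}{n}\right)e^{-x/n}.
\end{equation*}
Taking $-\log$ and differentiating termwise (justified by uniform convergence on compact subsets of $\C \setminus \{0, -1, -2, \ldots\}$) yields
\begin{equation*}
\psi(x) = -\gamma - \frac{1}{x} + \sum_{n = 1}^{\infty}\left(\frac{1}{n} - \frac{1}{n + x}\right).
\end{equation*}
Setting $x = 1/2$ gives $\psi(1/2) = -\gamma - 2 + \sum_{n \ge 1}\left(\frac{1}{n} - \frac{2}{2n+1}\right)$; grouping terms and comparing against the alternating harmonic series $\log 2 = 1 - \frac{1}{2} + \frac{1}{3} - \cdots$ collapses the sum to $2 - 2\log 2$, so that $\psi(1/2) = -\gamma - 2\log 2 = -\gamma - \log 4$. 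Combined with $\psi(3/2) = \psi(1/2) + 2$, this is exactly the claim.

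The only delicate point is the rearrangement of the conditionally convergent series in the evaluation of $\psi(1/2)$; this can be handled rigorously by working with partial sums up to $N$ and passing to the limit, or circumvented entirely by using Legendre's duplication formula $\Gamma(2x) = \pi^{-1/2}2^{2x-1}\Gamma(x)\Gamma(x+1/2)$, whose logarithmic derivative at $x = 1$ reads $2\psi(2) = 2\log 2 + \psi(1) + \psi(3/2)$; substituting the known values $\psi(1) = -\gamma$ and $\psi(2) = 1 - \gamma$ then yields $\psi(3/2) = 2 - \gamma - \log 4$ in one line. I would present the duplication-formula argument as the main proof and relegate the Weierstrass derivation to a remark, since it is shorter and bypasses any series manipulation.
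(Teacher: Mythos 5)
Your proposal is correct, but note that the paper itself offers no proof of this lemma at all: it is one of the ``basic facts \dots\ stated \dots\ without proof,'' with the reader sent to p.~335 of Davis--Matiyasevich--Robinson for references. So you are not paralleling an argument in the paper; you are supplying one it omits. Both of your routes check out. The recurrence $\Gamma'(x+1)/\Gamma(x+1) = \Gamma'(x)/\Gamma(x) + 1/x$ reduces the claim to the special value at $1/2$, and your series evaluation there is right: the terms $\frac{1}{n} - \frac{2}{2n+1} = \frac{1}{n(2n+1)}$ form an \emph{absolutely} convergent series, so the ``delicate rearrangement'' you worry about is milder than you suggest --- the only step needing care is un-pairing it into the alternating harmonic series, which is legitimate since you are merely grouping consecutive terms of a convergent series. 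Your preferred route via Legendre's duplication formula, $2\,\frac{\Gamma'(2)}{\Gamma(2)} = 2\log 2 + \frac{\Gamma'(1)}{\Gamma(1)} + \frac{\Gamma'(3/2)}{\Gamma(3/2)}$ with $\Gamma'(1)/\Gamma(1) = -\gamma$ and $\Gamma'(2)/\Gamma(2) = 1-\gamma$, is indeed the cleanest and gives $2 - \gamma - \log 4$ immediately. One presentational caution: the paper already uses $\psi$ for the Chebyshev function (Lemma \ref{l3}) and $\psi_a$ for Pell-equation solutions, so if this proof were spliced in, the digamma function should be given a different symbol to avoid a clash.
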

\begin{lemma} \label{l2}
If we let $R$ to denote the set of nontrivial zeros of $\zeta(s)$,
\begin{equation*}
\frac{\zeta^{'}(s)}{\zeta(s)} = \log 2\pi - 1 - \frac{\gamma}{2} - \frac{1}{s-1} - \frac{1}{2}\frac{\Gamma^{'}\left(\frac{s}{2} + 1\right)}{\Gamma\left(\frac{s}{2} + 1\right)} + \sum_{\rho \in R} \left(\frac{1}{s-\rho} + \frac{1}{\rho}\right).
\end{equation*}
Moreover,
\begin{equation*}
\lim_{s\to 1}\left(\frac{\zeta^{'}(s)}{\zeta(s)} + \frac{1}{s-1}\right) = \gamma.
\end{equation*}
\end{lemma}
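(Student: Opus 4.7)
The plan is to derive the partial-fraction expansion from Hadamard's factorization theorem applied to the completed zeta function $\xi(s) = \tfrac{1}{2}s(s-1)\pi^{-s/2}\Gamma(s/2)\zeta(s)$, which is classically entire of order one with zero set exactly $R$ (the trivial zeros of $\zeta$ being cancelled by the poles of $\Gamma(s/2)$). Hadamard's theorem gives
\begin{equation*}
\xi(s) = e^{A + Bs}\prod_{\rho \in R}\left(1 - \frac{s}{\rho}\right)e^{s/\rho},
\end{equation*}
converging uniformly on compact subsets of $\C$. Taking the logarithmic derivative yields $\xi'(s)/\xi(s) = B + \sum_{\rho \in R}\bigl(\tfrac{1}{s-\rho} + \tfrac{1}{\rho}\bigr)$. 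Differentiating $\log \xi(s)$ directly from the defining product produces a second expression,
\begin{equation*}
\frac{\xi'(s)}{\xi(s)} = \frac{1}{s} + \frac{1}{s-1} - \frac{1}{2}\log\pi + \frac{1}{2}\frac{\Gamma'(s/2)}{\Gamma(s/2)} + \frac{\zeta'(s)}{\zeta(s)},
\end{equation*}
and the digamma recursion $\psi(z+1) = \psi(z) + 1/z$ applied with $z = s/2$ replaces $\tfrac{1}{2}\Gamma'(s/2)/\Gamma(s/2)$ by $\tfrac{1}{2}\Gamma'(s/2+1)/\Gamma(s/2+1) - 1/s$, absorbing the stray $1/s$. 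Equating the two expressions and solving for $\zeta'/\zeta$ yields the stated identity up to identifying $B$ with $\log 2\pi - 1 - \gamma/2 - \tfrac{1}{2}\log\pi$, equivalently $B = \log 2 + \tfrac{1}{2}\log\pi - 1 - \gamma/2$.

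The main obstacle is pinning down $B$. Since $\tfrac{1}{s-\rho} + \tfrac{1}{\rho}$ vanishes at $s = 0$, the Hadamard formula identifies $B$ with $\xi'(0)/\xi(0)$. The functional equation $\xi(s) = \xi(1-s)$ gives both $\xi(0) = \xi(1)$ and $\xi'(0) = -\xi'(1)$, so $B = -\xi'(1)/\xi(1)$, and I would evaluate this by taking the limit of the second formula for $\xi'/\xi$ as $s \to 1$: the singular terms $\tfrac{1}{s-1}$ and $\zeta'(s)/\zeta(s)$ combine, by the limit statement of the lemma, to $\gamma$, while the remaining pieces reduce to $1 - \tfrac{1}{2}\log\pi + \tfrac{1}{2}\psi(1/2)$. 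Invoking the classical value $\psi(1/2) = -\gamma - 2\log 2$, which follows from the value $\psi(3/2) = 2 - \gamma - \log 4$ of Lemma~\ref{l1} via the digamma recursion already used, produces exactly the required constant.

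For the limit claim, I would argue directly from the Laurent expansion $\zeta(s) = \tfrac{1}{s-1} + \gamma + O(s-1)$ about the simple pole at $s = 1$. Differentiation gives $\zeta'(s) = -\tfrac{1}{(s-1)^2} + O(1)$, and dividing and expanding the denominator geometrically yields $\zeta'(s)/\zeta(s) = -\tfrac{1}{s-1} + \gamma + O(s-1)$, from which the limit is immediate. For the overall logical order, I would prove this limit first so that it is available to invoke when evaluating $B$; the convergence of the Hadamard sum and the rearrangements needed to differentiate the infinite product term by term are standard, following from the bound $\sum_{\rho} |\rho|^{-1-\varepsilon} < \infty$ guaranteed by $\xi$ having order one.
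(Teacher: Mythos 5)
The paper does not actually prove this lemma: it is one of the facts explicitly ``presented as lemmas without proof,'' with the reader sent to the references on p.~335 of Davis--Matiyasevich--Robinson. So there is no in-paper argument to match against; what you have written is the standard textbook proof (essentially Davenport, \emph{Multiplicative Number Theory}, Ch.~12), and it checks out. The structure is sound: $\xi$ is entire of order one and nonvanishing at $0$, so Hadamard gives $\xi(s)=e^{A+Bs}\prod_\rho(1-s/\rho)e^{s/\rho}$; logarithmic differentiation of the two expressions for $\xi$, together with the digamma recursion at $z=s/2$, reduces everything to the constant $B$; and your evaluation $B=\xi'(0)/\xi(0)=-\xi'(1)/\xi(1)=-\gamma/2-1+\tfrac12\log 4\pi$ via the functional equation, the limit $\lim_{s\to1}(\zeta'/\zeta+1/(s-1))=\gamma$, and the value $\Gamma'(1/2)/\Gamma(1/2)=-\gamma-2\log 2$ (obtained from Lemma~\ref{l1} by the same recursion) produces exactly the constant $\log 2\pi-1-\gamma/2$ after accounting for the $+\tfrac12\log\pi$ term. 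The Laurent-expansion derivation of the limit is also correct and rightly placed first in the logical order. Two small caveats, neither fatal: you are taking on faith the two genuinely substantial inputs (that $\xi$ has order one, and Hadamard's factorization with the attendant convergence of $\sum_\rho|\rho|^{-1-\varepsilon}$), which is reasonable for a lemma of this kind but is where all the real work lives; and you should avoid writing $\psi$ for the digamma function in this context, since the paper reserves $\psi$ for the Chebyshev function in Lemma~\ref{l3}.
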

\begin{lemma} \label{l3}
For $x \in \mathbb{R}$, where $x > 0$,
\begin{equation*}
\psi(x) = \sum_{p^k \le x}\log p = \sum_{j \le x} \log \eta(j),
\end{equation*}
and
\begin{equation*}
\psi_1(x) = \int_1^x \psi(u)du.
\end{equation*}
Thus, for integral $x$, $\psi(x) \le \sum_{j \le x}\log x = x\log x \le x^{3/2}$ and $\psi_1(x) = \log \delta(x)$. This gives us the following expression for $\psi_1(x)$ for $x \ge 1$,
\begin{equation*}
\psi_1(x) = \frac{x^2}{2} - \sum_{\rho\in R}\frac{x^{\rho+1}}{\rho(\rho+1)} - x\frac{\zeta^{'}(0)}{\zeta(0)} + \frac{\zeta^{'}(-1)}{\zeta(-1)} - \sum_{r = 1}^{\infty}\frac{x^{1-2r}}{2r(r-1)}.
\end{equation*}
\end{lemma}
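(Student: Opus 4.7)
The plan is to handle the four claims in order. The first three are essentially bookkeeping; the fourth (a Riemann--von Mangoldt explicit formula for $\psi_1$) is the substantive step.

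For the identities $\psi(x) = \sum_{p^k \le x} \log p = \sum_{j \le x} \log \eta(j)$, I would simply unfold the definition of $\eta$: $\log \eta(j) = 0$ unless $j = p^k$ is a prime power, in which case $\log \eta(j) = \log p$, so the two sums agree term by term. For the elementary bound at integer $x$, each of the at most $x$ nonzero summands in $\sum_{p^k \le x} \log p$ is bounded by $\log x$, giving $\psi(x) \le x \log x$; combined with the standard calculus inequality $\log x \le \sqrt{x}$ for $x \ge 1$ (the function $\sqrt{x} - \log x$ equals $1$ at $x = 1$ and attains its minimum at $x = 4$ with value $2 - \log 4 > 0$), this yields $\psi(x) \le x^{3/2}$. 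For the identity $\psi_1(x) = \log \delta(x)$ at integer $x$, I would observe that $\psi$ is a step function taking the constant value $\psi(j)$ on each interval $[j, j+1)$, so
\begin{equation*}
\psi_1(x) = \int_1^x \psi(u) \, du = \sum_{j=1}^{x-1} \psi(j) = \sum_{n < x} \sum_{j \le n} \log \eta(j) = \log \prod_{n < x} \prod_{j \le n} \eta(j) = \log \delta(x).
\end{equation*}

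For the explicit formula, the approach is contour integration of the Dirichlet series $-\zeta'(s)/\zeta(s) = \sum_{n \ge 1} \Lambda(n) n^{-s}$ (convergent for $\Re(s) > 1$), where $\Lambda(p^k) = \log p$ and $\Lambda(n) = 0$ otherwise. Perron's formula applied to $\psi$ and then integrated once in $x$ from $1$ to $x$ (which inserts the weight $1/(s(s+1))$ and absorbs a constant which one checks vanishes from $\psi_1(1) = 0$) yields, for any $c > 1$,
\begin{equation*}
\psi_1(x) = \frac{1}{2\pi i} \int_{c - i\infty}^{c + i\infty} \left( -\frac{\zeta'(s)}{\zeta(s)} \right) \frac{x^{s+1}}{s(s+1)} \, ds.
\end{equation*}
Shifting the contour leftward to $\Re(s) \to -\infty$ and summing residues, and using Lemma~\ref{l2} to read off the pole structure of $-\zeta'(s)/\zeta(s)$, I would collect: a simple pole at $s = 1$ contributing $x^2/2$; simple poles at each nontrivial zero $\rho \in R$ contributing $-x^{\rho+1}/(\rho(\rho+1))$; simple poles at $s = 0$ and $s = -1$ (where $\zeta$ does not vanish, so the pole comes from $1/(s(s+1))$) contributing $-x\,\zeta'(0)/\zeta(0)$ and $\zeta'(-1)/\zeta(-1)$ respectively; and simple poles at each trivial zero $s = -2r$ contributing the terms of the final sum.

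The main obstacle is justifying the contour shift rigorously. One must select horizontal closing segments $\Im(s) = \pm T_n$ with $T_n \to \infty$ on which $|\zeta'(s)/\zeta(s)|$ is under uniform control; such $T_n$ exist because the Riemann--von Mangoldt zero-counting estimate $N(T+1) - N(T) = O(\log T)$ ensures a gap between consecutive zeros of size at least of order $1/\log T$, and the partial-fraction expansion of Lemma~\ref{l2} gives an explicit bound in such a gap. One must then show that the integrals over those horizontal segments vanish in the limit, and that the vertical integral over $\Re(s) = -U$ tends to $0$ as $U \to \infty$ (which follows from standard bounds on $\zeta'/\zeta$ well inside the left half-plane, using the functional equation). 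The essential technical point is that the weight $1/(s(s+1))$ from the extra integration supplies $O(|s|^{-2})$ decay, making the residue sum over the nontrivial zeros \emph{absolutely} convergent; this is what makes the integrated formula cleaner than the unintegrated explicit formula for $\psi(x)$, whose analogous sum over zeros converges only conditionally.
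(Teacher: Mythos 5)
The paper does not prove this lemma at all: it is one of the ``basic facts'' explicitly stated without proof, with the reader referred to page 335 of Davis--Matiyasevich--Robinson for references (the explicit formula itself is classical, essentially Theorem 28 of Ingham's tract). So there is nothing in the paper to compare your argument against line by line; what you have written is the standard proof that those references contain, and it is essentially sound. Your handling of the three elementary claims is correct (unfolding $\eta$, the bound $\log x \le \sqrt{x}$ for $x \ge 1$, and telescoping the integral of the step function $\psi$ into $\log\prod_{n<x}\prod_{j\le n}\eta(j)$), and your residue bookkeeping for the contour shift is right: the pole of $-\zeta'/\zeta$ at $s=1$ gives $x^2/2$, the nontrivial zeros give $-x^{\rho+1}/(\rho(\rho+1))$, the factors $1/s$ and $1/(s+1)$ give $-x\,\zeta'(0)/\zeta(0)$ and $+\zeta'(-1)/\zeta(-1)$, and the trivial zero at $s=-2r$ contributes $-x^{1-2r}/(2r(2r-1))$. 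Note that this last computation quietly corrects a typo in the statement as printed, whose denominator $2r(r-1)$ vanishes at $r=1$; the intended denominator is $2r(2r-1)$, as confirmed by the later use of $\sum_{r\ge 1} 1/(2r(2r-1)) = \log 2$ in the proof of Theorem~\ref{dp1}. You correctly identify the two genuine technical points (choice of horizontal segments avoiding zeros via $N(T+1)-N(T)=O(\log T)$, and the absolute convergence of $\sum_\rho |\rho|^{-2}$ supplied by the extra $1/(s(s+1))$ weight), though as written these remain assertions rather than proofs; for a complete argument one would still need to supply the bound on $\zeta'/\zeta$ on the horizontal segments and on the far left vertical line, which is exactly what the cited sources do.
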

\begin{lemma} \label{l4}
We also have that
\begin{equation*}
\frac{\zeta^{'}(0)}{\zeta(0)} = \log 2\pi,
\end{equation*}
and
\begin{equation*}
\frac{\zeta^{'}(1)}{\zeta(1)} = 12\log A - 1 = 1.985\ldots,
\end{equation*}
where $A$ is the Glaisher-Kinlekin constant defined exactly as such
\begin{equation*}
A = \lim_{n\to\infty}\frac{H(n)}{n^{n^2/2 + n/2 + 1/12}e^{-n^2/4}},
\end{equation*}
where $H(n)$ is the hyperfactorial function.
\end{lemma}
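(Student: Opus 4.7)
The plan is to derive both identities from the functional equation
\[
\zeta(s) = 2^{s}\pi^{s-1}\sin(\pi s/2)\,\Gamma(1-s)\,\zeta(1-s),
\]
together with standard Laurent expansions at $s=0$ and, for the second identity, an Euler--Maclaurin analysis that ties $\zeta'(-1)$ to the defining limit of the Glaisher--Kinkelin constant recorded in the statement.

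For $\zeta'(0)/\zeta(0)=\log 2\pi$, I would exploit the cancellation between the simple pole of $\zeta(1-s)$ at $s=0$ and the simple zero of $\sin(\pi s/2)$ there. Regrouping the functional equation as
\[
\zeta(s)=\frac{(2\pi)^{s}}{\pi}\,\Gamma(1-s)\,\bigl[\sin(\pi s/2)\,\zeta(1-s)\bigr],
\]
each factor is now analytic at $s=0$. Substituting $\sin(\pi s/2)=\pi s/2 + O(s^{3})$, the Laurent expansion $\zeta(1-s)=-1/s+\gamma+O(s)$, $\Gamma(1-s)=1+\gamma s+O(s^{2})$ (using $\Gamma'(1)=-\gamma$), and $(2\pi)^{s}=1+s\log(2\pi)+O(s^{2})$, a short product-rule computation gives $\zeta(s) = -\tfrac{1}{2}-\tfrac{s}{2}\log(2\pi) + O(s^{2})$; the two occurrences of $\gamma$ cancel exactly. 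Reading off $\zeta(0)=-1/2$ and $\zeta'(0)=-\tfrac{1}{2}\log(2\pi)$ then yields the ratio.

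For the second identity I would first note $\zeta(-1)=-1/12$ (a standard consequence of the functional equation at $s=-1$, using $\zeta(2)=\pi^{2}/6$), so it suffices to prove $\zeta'(-1)=\tfrac{1}{12}-\log A$, from which the division produces $12\log A - 1 = 1.985\ldots$ Applying Euler--Maclaurin to $f(x)=x\log x$ on $[1,N]$ gives
\[
\sum_{k=1}^{N} k\log k = \tfrac{N^{2}}{2}\log N - \tfrac{N^{2}}{4} + \tfrac{N}{2}\log N + \tfrac{1}{12}\log N + C + o(1),
\]
and the definition of $A$ in the statement identifies $C=\log A$. On the other hand, applying Euler--Maclaurin to the partial sum $\sum_{k=1}^{N-1} k^{-s}$ yields a representation of $\zeta(s)$ valid in a half-plane containing $s=-1$; differentiating in $s$ and evaluating at $s=-1$ reproduces the same asymptotic expansion but with the constant now equal to $\tfrac{1}{12}-\zeta'(-1)$. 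Matching forces $\zeta'(-1)=\tfrac{1}{12}-\log A$.

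The principal obstacle is justifying the interchange of $\lim_{N\to\infty}$ with $\partial/\partial s$ at $s=-1$ in the Euler--Maclaurin representation, since the Dirichlet series defining $\zeta$ does not converge there. A clean way to avoid this is to use Hermite's contour-integral representation of $\zeta(s)$, or equivalently to invoke Lerch's formula $\zeta'(0,a)=\log\Gamma(a)-\tfrac{1}{2}\log(2\pi)$ together with its analog for $\zeta'(-1,a)$ expressed through the Barnes $G$-function, whose specialization at $a=1$ is precisely $\tfrac{1}{12}-\log A$. By contrast, the first identity is essentially routine Laurent-series bookkeeping once the functional equation is in hand.
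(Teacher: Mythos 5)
The paper does not actually prove this lemma: it is one of the facts explicitly ``presented as lemmas without proof,'' with the reader referred to the sources cited on p.~335 of Davis--Matiyasevich--Robinson. So there is no internal proof to compare against; you have supplied the standard derivation from scratch, and it is correct. The Laurent-expansion computation at $s=0$ is the usual route to $\zeta(0)=-\tfrac12$ and $\zeta'(0)=-\tfrac12\log 2\pi$, and the exact cancellation of the two $\gamma$'s is the right sanity check on the bookkeeping. For the second identity you have, correctly, read the statement as concerning $\zeta'(-1)/\zeta(-1)$ rather than the literal $\zeta'(1)/\zeta(1)$ printed in the lemma --- worth saying explicitly, since $\zeta'(s)/\zeta(s)$ diverges as $s\to 1$, the quantity actually used in Lemma~\ref{l3} is $\zeta'(-1)/\zeta(-1)$, and the numerical value $1.985\ldots$ matches $\bigl(\tfrac1{12}-\log A\bigr)/\bigl(-\tfrac1{12}\bigr)=12\log A-1$. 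Your Euler--Maclaurin matching of the constant term in $\sum_{k\le N}k\log k$ against the $s$-derivative at $s=-1$ of the Euler--Maclaurin representation of $\zeta(s)$ is the classical proof that $\zeta'(-1)=\tfrac1{12}-\log A$; the interchange-of-limits issue you flag is genuine but is resolved exactly as you indicate, either by retaining enough correction terms so that the remainder is analytic and uniformly $o(1)$ on a neighbourhood of $s=-1$ (after which Cauchy's integral formula justifies differentiating the asymptotic), or by invoking the Lerch/Barnes $G$-function evaluation. I see no gaps.
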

\begin{lemma} \label{l5}
\begin{equation*}
\left(\sum_{k \le n}\frac{1}{k}\right) - 1 < \log n < \sum_{k \le n}\frac{1}{k},
\end{equation*}
for $n$ a positive integer.
\end{lemma}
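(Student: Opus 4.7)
The plan is the standard integral-test comparison, exploiting the strict monotonicity of $f(x) = 1/x$ on $(0, \infty)$. I would begin by writing $\log n = \int_1^n \frac{dx}{x} = \sum_{k=1}^{n-1} \int_k^{k+1} \frac{dx}{x}$, so that both halves of the claimed double inequality reduce to termwise comparisons between $\int_k^{k+1} dx/x$ and a single reciprocal.

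For the upper bound $\log n < \sum_{k \le n} 1/k$, on each subinterval $[k,k+1]$ with $k \ge 1$ we have $1/x \le 1/k$, with strict inequality on $(k,k+1]$; hence $\int_k^{k+1} dx/x < 1/k$. Summing $k = 1, \ldots, n-1$ gives $\log n < \sum_{k=1}^{n-1} 1/k \le \sum_{k \le n} 1/k$. For the lower bound $\sum_{k \le n} 1/k - 1 < \log n$, symmetrically $1/x > 1/(k+1)$ on $[k, k+1)$, so $\int_k^{k+1} dx/x > 1/(k+1)$, and summing from $k=1$ to $n-1$ yields $\log n > \sum_{k=2}^{n} 1/k = \sum_{k \le n} 1/k - 1$. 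Rearranging gives the desired inequality.

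There is no real obstacle here: this is an elementary Riemann-sum argument. The only subtlety worth checking is the strictness of each inequality for $n \ge 2$, which is immediate from the continuity and strict monotonicity of $1/x$ on $[1,\infty)$, ensuring that the integral over any nondegenerate interval lies strictly between the areas of the inscribed and circumscribed rectangles. (The case $n=1$ is trivial as both sums reduce to a single term; the lemma is only needed downstream in the regime where $n$ is large.)
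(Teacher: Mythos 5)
Your integral-comparison proof is correct and is the standard argument; note, however, that the paper deliberately states Lemma~\ref{l5} \emph{without} proof, deferring to the references on p.~335 of \cite{dmr1976}, so there is no in-paper derivation to compare against. The termwise bounds $\int_k^{k+1}\frac{dx}{x} < \frac{1}{k}$ and $\int_k^{k+1}\frac{dx}{x} > \frac{1}{k+1}$, summed over $1 \le k \le n-1$, give exactly the two halves of the claim for $n \ge 2$. The one point worth flagging is the case $n=1$: there the left-hand inequality is in fact an equality, $\left(\sum_{k\le 1}\frac{1}{k}\right)-1 = 0 = \log 1$, so the strict form of the statement fails at $n=1$ rather than being ``trivial'' as you assert. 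This is a blemish in the lemma as stated (shared with the source) rather than a defect of your argument, and, as you correctly observe, it is harmless for the way the lemma is applied downstream, where only large arguments matter.
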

We need one additional lemma, whose proof will be shown, as it is derived from the above lemmas.
\begin{lemma} \label{l6}
\begin{equation*}
\sum_{\rho \in R}\frac{1}{\rho(1-\rho)} = \gamma + 2 - \log 4\pi.
\end{equation*}
\end{lemma}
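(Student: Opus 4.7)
The plan is to extract the identity by passing to the limit $s \to 1$ in the logarithmic derivative expansion provided by Lemma~\ref{l2}, using Lemma~\ref{l1} to eliminate the digamma term.

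First, I would apply the partial fraction identity
\begin{equation*}
\frac{1}{\rho(1-\rho)} = \frac{1}{\rho} + \frac{1}{1-\rho},
\end{equation*}
so that the sum we want is exactly $\sum_{\rho \in R}\bigl(\frac{1}{1-\rho} + \frac{1}{\rho}\bigr)$, which is precisely what the sum term in Lemma~\ref{l2} becomes upon setting $s = 1$. The trick, therefore, is to evaluate the identity of Lemma~\ref{l2} at $s = 1$, where both the left-hand side and the term $-\frac{1}{s-1}$ on the right-hand side are individually singular but whose combination has the known finite limit $\gamma$ recorded in the second half of Lemma~\ref{l2}. Adding $\frac{1}{s-1}$ to both sides and letting $s \to 1$ yields
\begin{equation*}
\gamma = \log 2\pi - 1 - \frac{\gamma}{2} - \frac{1}{2}\frac{\Gamma'(3/2)}{\Gamma(3/2)} + \sum_{\rho \in R}\left(\frac{1}{1-\rho} + \frac{1}{\rho}\right).
\end{equation*}

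Next, I would substitute $\frac{\Gamma'(3/2)}{\Gamma(3/2)} = 2 - \gamma - \log 4$ from Lemma~\ref{l1}. The $\gamma$-terms cancel ($-\frac{\gamma}{2} + \frac{\gamma}{2} = 0$), the constant terms combine to $-1 - 1 = -2$, and the logarithmic terms combine as $\log 2\pi + \frac{1}{2}\log 4 = \log 2\pi + \log 2 = \log 4\pi$. Solving for the sum then gives the desired identity $\sum_{\rho \in R}\frac{1}{\rho(1-\rho)} = \gamma + 2 - \log 4\pi$.

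The only subtlety is justifying that the limit $s \to 1$ can be passed inside the sum over $\rho$; this is standard, since the sum $\sum_{\rho}\bigl(\frac{1}{s-\rho} + \frac{1}{\rho}\bigr)$ converges uniformly on compact subsets of $\mathbb{C}$ avoiding the zeros $\rho$ (a consequence of the Hadamard product expansion of $\xi$, which guarantees $\sum_{\rho} |\rho|^{-2} < \infty$ and thus absolute convergence of the evaluated sum at $s = 1$). So the main obstacle is not really an obstacle but a bookkeeping exercise: all the analytic content has already been absorbed into Lemmas~\ref{l1} and~\ref{l2}, and what remains is careful algebraic collection of terms.
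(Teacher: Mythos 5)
Your proof is correct and follows essentially the same route as the paper: the partial fraction identity $\frac{1}{\rho(1-\rho)} = \frac{1}{\rho} + \frac{1}{1-\rho}$, evaluation of Lemma~\ref{l2} at $s = 1$ using $\lim_{s\to 1}\bigl(\frac{\zeta'(s)}{\zeta(s)} + \frac{1}{s-1}\bigr) = \gamma$, and substitution of Lemma~\ref{l1}, with the same cancellations. The only difference is your closing remark justifying the interchange of the limit with the sum over $\rho$, which the paper leaves implicit.
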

\begin{proof}
We have that
\begin{equation*}
\sum_{\rho \in R} \frac{1}{\rho(1-\rho)} = \sum_{\rho \in R}\left(\frac{1}{1-\rho} + \frac{1}{\rho}\right).
\end{equation*}
From Lemma ~\ref{l2} it follows that
\begin{equation*}
\sum_{\rho \in R}\left(\frac{1}{1-\rho} + \frac{1}{\rho}\right) = \lim_{s\to 1}\left(\frac{\zeta^{'}(s)}{\zeta(s)} + \frac{1}{s-1}\right) - \log 2\pi + 1 + \frac{\gamma}{2} + \frac{1}{2}\frac{\Gamma^{'}(3/2)}{\Gamma(3/2)}.
\end{equation*}
Lastly, from Lemmas ~\ref{l1} and ~\ref{l2}, we have then that
\begin{equation*}
\lim_{s\to 1}\left(\frac{\zeta^{'}(s)}{\zeta(s)} + \frac{1}{s-1}\right) - \log 2\pi + 1 + \frac{\gamma}{2} + \frac{1}{2}\frac{\Gamma^{'}(3/2)}{\Gamma(3/2)} = \gamma + 2 - \log 4\pi.
\end{equation*}
\end{proof}
Now, assume the Riemann hypothesis, namely that $\rho \in R$ implies $\Re(\rho) = \frac{1}{2}$. Moreover, for $\rho \in R$, the complex conjugate $\bar{\rho} = 1 - \rho$. Thus,
\begin{equation*}
\sum_{\rho \in R}\frac{1}{|\rho| \cdot |\rho + 1|} \le \sum_{\rho \in R}\frac{1}{|\rho|^2} = \sum_{\rho \in R} \frac{1}{\rho\bar{\rho}} = \sum_{\rho \in R}\frac{1}{\rho(1-\rho)}.
\end{equation*}
Hence, from Lemma ~\ref{l6},
\begin{equation}\label{bound}
\sum_{\rho \in R}\frac{1}{|\rho|\cdot |\rho + 1|} \le \gamma + 2 - \log 4\pi.
\end{equation}
Using \eqref{bound} and Lemma ~\ref{l3}, and noting that the Riemann hypothesis implies that for $\rho \in R$, $x \ge 1$, we have that $|x^{\rho + 1}| = x^{3/2}$, then
\begin{equation*}
|\psi_1(x) - \frac{x^2}{2}| \le x^{3/2}\left(\sum_{\rho \in R}\frac{1}{|\rho|\cdot |\rho+1|} + |\frac{\zeta^{'}(0)}{\zeta(0)}| +  |\frac{\zeta^{'}(-1)}{\zeta(-1)}| + \sum_{r = 1}^{\infty}\frac{1}{2r(2r-1)}\right).
\end{equation*}
A subsequent simplification of the infinite sum gives us,
\begin{equation*}
\sum_{r = 1}^{\infty}\frac{1}{2r(2r-1)} = \sum_{r = 1}^{\infty}\left(\frac{1}{2r-1} - \frac{1}{2r}\right) = \sum_{n = 1}^{\infty}\frac{(-1)^{n+1}}{n} = \log 2.
\end{equation*}
And with the additional use of Lemma ~\ref{l4},
\begin{equation*}
|\psi_1(x) - \frac{x^2}{2}| \le x^{3/2}\left(\gamma + 2 - \log 4\pi + \log 2\pi + 2 + \log 2\right) = (4+\gamma)x^{3/2} < 5x^{3/2}.
\end{equation*}
But by Lemma ~\ref{l5},
\begin{equation*}
|\sum_{k \le \delta(x)}\frac{1}{k} - \psi_1(x)| < 1.
\end{equation*}
As a result, we have
\begin{equation*}
|\sum_{k \le \delta(x)}\frac{1}{k} - \frac{x^2}{2}| < 1 + 5x^{3/2} \le 6x^{3/2}.
\end{equation*}
Thus,
\begin{equation*}
\left(\sum_{k \le \delta(x)}\frac{1}{k} - \frac{x^2}{2}\right)^2 < 36x^3,
\end{equation*}
which gives us \eqref{r1}. \newline
\indent To prove the implication in the other direction, suppose that \eqref{r1} holds for all positive integers. Then by Lemma ~\ref{l5}, for $x = 1, 2, 3, \ldots$ ,
\begin{equation*}
|\psi_1(x) - \frac{x^2}{2}| < 1 + 6x^{3/2}.
\end{equation*}
Hence, for any real $x \ge 1$, by Lemma ~\ref{l3},
\begin{equation*}
|\psi_1(x) - \frac{x^2}{2}| \le \int_{[x]}^{x}\psi(u)du + |\psi_1([x])-\frac{[x]^2}{2}| + |\frac{[x]^2 - x^2}{2}| < \psi([x]) + 1 + 6x^{3/2} + x \le 9x^{3/2}.
\end{equation*}
We now employ arguments that are modeled after those used by Ingham \cite{ingham}. For $s > 1$, it is a well-known fact
\begin{equation*}
\zeta(s) = \prod_{p}\frac{1}{1-p^{-s}},
\end{equation*}
where $p$ runs through all primes. Therefore,
\begin{equation*}
\log \zeta(s) = - \sum_{p}\log(1-p^{-s}) = \sum_{p,m}\frac{1}{mp^{ms}},
\end{equation*}
where $m$ runs through all positive integers. By differentiation,
\begin{equation*}
-\frac{\zeta^{'}(s)}{\zeta(s)} = \sum_{p}\frac{p^{-s}\log p}{1-p^{-s}} = \sum_{p,m}\frac{\log p}{p^{ms}}.
\end{equation*}
In terms of the von Mangoldt function, $\Lambda(n)$, we can rewrite this as,
\begin{equation*}
-\frac{\zeta^{'}(s)}{\zeta(s)} = \sum_{n = 1}^{\infty}\frac{\Lambda(n)}{n^s},
\end{equation*}
where $\Lambda(n)$ is $\log p$ if $n$ is a positive power of a prime $p$, and is 0 otherwise. Moreover, note that $\psi(x) = \sum_{n \le x}\Lambda(n)$. Therefore, we have that
\begin{equation*}
-\frac{\zeta^{'}(s)}{\zeta(s)} = s\int_1^{\infty}\frac{\psi(x)}{x^{s+1}}dx.
\end{equation*}
Or, in terms of $\psi_1(x)$,
\begin{equation*}
-\frac{\zeta^{'}(s)}{\zeta(s)} = s(s+1)\int_1^{\infty}\frac{\psi_1(x)}{x^{s+2}}dx.
\end{equation*}
Hence,
\begin{equation} \label{inequality}
-\frac{\zeta^{'}(s)}{\zeta(s)} - \frac{s(s+1)}{2(s-1)} = s(s+1)\int_1^{\infty}\frac{\psi_1(x)-\frac{x^2}{2}}{x^{s+2}}dx.
\end{equation}
So, assuming our inequality,
\begin{equation*}
|\frac{\psi_1(x) - \frac{x^2}{2}}{x^{s+2}}| \le \frac{9}{|x^{s+1/2}|},
\end{equation*}
the integral in \eqref{inequality} uniformly converges on the region $\Re(s) > \sigma_0$, for any $\sigma_0 > 1/2$. \eqref{inequality} must be valid for $\Re(s) > 1/2$, and the function on the left is analytic in this domain. But clearly, this implies that $\zeta(s)$ does not vanish for $1/2 < \Re(s) < 1$, which means that the Riemann hypothesis holds. \newline
\indent Theorem ~\ref{dp1} is now proved.
\subsection{Matiyasevich (1993)}
Of course, Theorem ~\ref{dp1} is not the only method of proving that the Riemann hypothesis is a $\Pi_1^0$ statement. Another, albeit lesser known, formulation is due to Matiyasevich \cite{mat-book} and involves the usage of less variables in its diophantine representation than the Davis, Matiyasevich, and Robinson \cite{dmr1976} result. Matiyasevich essentially takes advantage of the fact that the Riemann hypothesis implies that for
\begin{equation} \label{m1}
n \ge 600,
\end{equation}
\begin{equation} \label{m2}
|\psi(n) - n| < n^{1/2}\log^2 n.
\end{equation}
Since
\begin{equation*}
\psi(n) = \log(\operatorname{lcm}(1,2,\ldots,n)),
\end{equation*}
we can reduce \eqref{m2} to a system of diophantine equations because the function lcm (least common multiple) is diophantine. However, this is only immediately obvious when lcm is considered as a function of two arguments, not with indefinite numbers of arguments. Namely, the diophantine representation of lcm for positive integers $b$ and $c$ is
\begin{equation*}
a = \operatorname{lcm}(b,c) \Leftrightarrow bc = a\operatorname{gcd}(b,c),
\end{equation*}
where the generalized diophantine representation for gcd is
\begin{equation*}
a = \operatorname{gcd}(b,c) \Leftrightarrow bc > 0\mbox{           }\&\mbox{           }a \mid b\mbox{           }\&\mbox{           }a \mid c\mbox{           }\&\mbox{           }\exists xy \{a = bx-cy\},
\end{equation*}
and as usual $\mid$ denotes the relation of divisibility which is diophantine since
\begin{equation*}
a \mid b \Leftrightarrow \exists x \{ax = b\}.
\end{equation*}
Matiyasevich \cite{mat-book} avoids this problem by way of a bounded universal quantifier, namely, the number $m$ is a common multiple of $1,2,\ldots,n$ if and only if
\begin{equation} \label{m3}
\forall y < n\{(y+1)\mid m\}
\end{equation}
and is the least common multiple if, in addition,
\begin{equation} \label{m4}
m > 0\mbox{           }\&\mbox{           }\forall y < m \{y = 0 \lor \exists x < n\{(x+1)\nmid y\}\}.
\end{equation}
Now, the next problem we face is whether or not $\log n$ is diophantine. Since its values are not generally integers, one method would be to show that $\lfloor \log n \rfloor$ is diophantine, but this is a nontrivial task. Instead, Matiyasevich \cite{mat-book} introduces the relation mentioned before, namely,
\begin{equation} \label{explog}
\operatorname{explog}(a,b) \Leftrightarrow \exists x \setof{x > b + 1\mbox{           }\&\mbox{           }\left(1 + \frac{1}{x}\right)^{xb} \le a + 1 < 4 \left(1 + \frac{1}{x}\right)^{xb}}.
\end{equation}
We will first prove that for all $a$ and $b$,
\begin{equation} \label{i1}
\operatorname{explog}(a,b) \Rightarrow |b-\log(a+1)| < 2,
\end{equation}
and second, that
\begin{equation} \label{i2}
\forall a \exists b \{\operatorname{explog}(a,b)\}.
\end{equation}
We proceed as follows. It is a well-known fact from calculus that for $x > 0$,
\begin{equation*}
\left(1 + \frac{1}{x}\right)^{x} < e < \left(1 + \frac{1}{x}\right)^{x+1}.
\end{equation*}
Raising this inequality to the power $b$, we have
\begin{equation*}
\left(1 + \frac{1}{x}\right)^{xb} < e^b < \left(1 + \frac{1}{x}\right)^{(x+1)b}.
\end{equation*}
Dividing by $e$, we have,
\begin{equation*}
\frac{e^b}{e} \le \frac{(1+\frac{1}{x})^{(x+1)b}}{(1+\frac{1}{x})^x} < \left(1+\frac{1}{x}\right)^{xb} \le a + 1 < 4\left(1+\frac{1}{x}\right)^{xb} \le 4e^b,
\end{equation*}
which implies \eqref{i1}. \newline
\indent Now, for any $a$ one can find $b$ such that
\begin{equation*}
b \le \log(a+1) < b + \log 3.
\end{equation*}
Choosing a very large $x$, we can make both $\left(1+\frac{1}{x}\right)^x$ and $\left(1+\frac{1}{x}\right)^{x+1}$ arbitrarily close to $e$ since
\begin{equation*}
e = \lim_{x\to\infty}\left(1+\frac{1}{x}\right)^{x} = \lim_{x\to\infty}\left(1+\frac{1}{x}\right)^{x+1},
\end{equation*}
thereby validating the inequalities in \eqref{explog}.
\begin{theorem}[Matiyasevich] \label{mat-red}
The negation of the Riemann hypothesis is equivalent to the existence of numbers $k$, $l$, $m$, and $n$ satisfying conditions \eqref{m1}, \eqref{m2}, \eqref{m3}, and
\begin{equation} \label{m4}
\operatorname{explog}(m-1,l),
\end{equation}
\begin{equation} \label{m5}
\operatorname{explog}(n-1,k),
\end{equation}
\begin{equation} \label{m6}
(l-n)^2 > 4n^2 k^4.
\end{equation}
\end{theorem}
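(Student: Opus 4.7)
The plan is to translate the classical analytic characterization of RH via bounds on $\psi(n) - n$ into a purely diophantine condition. The analytic input, invoked in the paragraph preceding the theorem, is that RH is equivalent to the inequality $|\psi(n) - n| < n^{1/2}\log^2 n$ holding for every integer $n \ge 600$ (a Schoenfeld-style effective form of the prime number theorem under RH, with the reverse implication handled by a Mellin-transform argument analogous to the one used at the end of the proof of Theorem~\ref{dp1}). The task therefore reduces to expressing the negation of this inequality---the existence of some $n \ge 600$ violating \eqref{m2}---as a diophantine condition in the integer variables $k,l,m,n$.

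The two structural ingredients are already in place. First, \eqref{m3} together with the minimality clause displayed before the theorem diophantinely forces $m = \operatorname{lcm}(1,2,\ldots,n)$, and, by Lemma~\ref{l3}, $\log m = \psi(n)$. Second, the $\operatorname{explog}$ bound \eqref{i1} gives $|l - \log m| < 2$ and $|k - \log n| < 2$, so that $l$ and $k$ are diophantine $O(1)$-approximations of $\psi(n)$ and $\log n$ respectively. Under these identifications, condition \eqref{m6} becomes a quantitative lower bound on $|\psi(n) - n|$ measured against $n k^2 \approx n(\log n)^2$.

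For the forward direction (negation of RH $\Rightarrow$ existence of $k,l,m,n$), I would select $n \ge 600$ at which \eqref{m2} fails, set $m = \operatorname{lcm}(1,\ldots,n)$, and invoke the existence clause \eqref{i2} to produce $l,k$ verifying \eqref{m4} and \eqref{m5}; the approximation bounds together with the threshold \eqref{m1} then reduce \eqref{m6} to the failure of \eqref{m2}, which holds by choice of $n$. For the reverse direction, starting from \eqref{m6} and substituting $l = \psi(n) + O(1)$, $k = \log n + O(1)$ yields a lower bound on $|\psi(n) - n|$ that contradicts \eqref{m2}, so RH must fail.

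The principal obstacle is bookkeeping the $O(1)$ approximation errors through the squared and fourth-power expressions in \eqref{m6}: since $\operatorname{explog}$ is accurate only to within $2$, one must use the threshold \eqref{m1} (and possibly adjust the constants in \eqref{m6}) so that the induced inequality on $|\psi(n) - n|$ is in each direction calibrated to the threshold $n^{1/2}\log^2 n$ of \eqref{m2}. A secondary, purely formal, issue is that \eqref{m3} and its minimality clause use bounded universal quantifiers; these must be eliminated in favour of a pure existential prefix via the DPRM theorem before the final diophantine equation is assembled.
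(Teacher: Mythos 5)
Your plan follows the paper's proof essentially step for step: the lcm conditions pin down $m=\operatorname{lcm}(1,\ldots,n)$ so that $\log m=\psi(n)$, the bound \eqref{i1} makes $l$ and $k$ approximations of $\psi(n)$ and $\log n$ to within $2$, and the triangle inequality transfers in both directions between \eqref{m6} and the failure of \eqref{m2} (note that the inequality the proof actually uses is $|l-n|>2n^{1/2}k^{2}$, i.e.\ $(l-n)^2>4nk^4$, which matches the $n^{1/2}\log^2 n$ threshold, rather than the $4n^2k^4$ printed in \eqref{m6}). The calibration issue you flag as the principal obstacle is resolved exactly as you anticipate: for the direction from the negation of RH to the existence of $k,l,m,n$, one picks a counterexample with $|\psi(n)-n|>10\,n^{1/2}\log^{2}n$ --- available because the failure of RH yields an $\Omega$-result, not merely a single violation of \eqref{m2} --- and this slack absorbs both the $\operatorname{explog}$ errors and the discrepancy between $k$ and $\log n$.
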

\begin{proof}
First, let us assume that such $k$, $l$, $m$, and $n$ exist. Then by \eqref{m2} and \eqref{m3},
\begin{equation} \label{s1}
m = e^{\psi(n)}.
\end{equation}
Moreover, by \eqref{i1}, it follows from \eqref{m5} that
\begin{equation} \label{s2}
|l - \psi(n)| < 2.
\end{equation}
And from \eqref{m6} that
\begin{equation} \label{s3}
|k - \psi(n)| < 2.
\end{equation}
As a result, we obtain
\begin{equation*}
|\psi(n) - n| \ge |l-n| - |l-\psi(n)| > 2n^{1/2}k^2 - 2 > n^{1/2}\log^2 n,
\end{equation*}
which with \eqref{m2}, gives us the desired contradiction. \newline
\indent On the other hand, assume that the Riemann hypothesis is not true. Since the Riemann hypothesis is equivalent to the statement
\begin{equation*}
\psi(n) = n + O(n^{1/2}\log^2 n),
\end{equation*}
then there is a number $n$ satisfying \eqref{m1} such that
\begin{equation*}
|\psi(n)-n| > 10 n^{1/2}\log^2 n.
\end{equation*}
Choosing $m$ according to \eqref{s1}, conditions \eqref{m3} and \eqref{m4} will be satisfied. Then by \eqref{i2}, we can find numbers $k$ and $l$ satisfying \eqref{m4} and \eqref{m5}. By \eqref{i1}, the inequalities \eqref{s2} and \eqref{s3} will be satisfied, which in turn imply that
\begin{equation*}
|l-n| \ge |\psi(n)-n|-|l-\psi(n)| > 10n^{1/2}\log^2 n - 2 > 2n^{1/2}(\log n+2)^2 > 2n^{1/2}k^2,
\end{equation*}
thereby satisfying the inequality \eqref{m6}.
\end{proof}
Since all of the conditions given in Theorem ~\ref{mat-red} are diophantine, then we can construct a diophantine equation whose unsolvability is equivalent to the Riemann hypothesis.
\section{Open problems that have not yet been shown to be $\Pi_1^0$ sentences}
Now, the natural question is whether or not every problem can be reduced to a question concerning the solvability of diophantine equations. Davis, Matiyasevich, and Robinson \cite{dmr1976} are keen to observe that not all open problems in number theory have been proven to be $\Pi_1^0$ sentences. The example they present is the twin prime conjecture, namely, the statement
\begin{equation*}
\forall n \{\exists p \{p>n\mbox{        }\&\mbox{           }\text{$p$ prime}\mbox{        }\&\mbox{           }\text{$p+2$ prime}\}\}.
\end{equation*} 
As Matiyasevich \cite{mat-book} mentions, since we can eliminate only bounded universal quantifiers, we have no obvious way to restate the twin prime conjecture as the problem of the solvability or unsolvability of a particular diophantine equation. However, as Davis, Matiyasevich, and Robinson \cite{dmr1976} suggest, if we strengthen the conjecture to the statement,
\begin{equation*}
\forall n \{\exists p \{n+4<p<2^{n+4}\mbox{        }\&\mbox{           }\text{$p$ prime}\mbox{        }\&\mbox{           }\text{$p+2$ prime}\}\},
\end{equation*}
then this strong twin prime conjecture can be transformed into the unsolvability of a particular diophantine equation. Note that like the twin prime conjecture, the strong twin prime conjecture is also widely believed to be true.
\newline
\indent We will add to this list another number-theoretic problem that has not been shown to be a $\Pi_1^0$ proposition. This is the Elliott-Halberstam conjecture, which is related to the Riemann hypothesis in that, if true, it results in deeper estimates for the distribution of primes in arithmetic progressions than the generalized Riemann hypothesis (which is a generalization of the Riemann hypothesis to a statement regarding the distribution of the zeros of global $L$-functions) does. We shall now quantify what we mean by `deeper'. Given an arithmetic progression $a \pmod q$, we let $\pi(x; q, a)$ denote the number of primes $\le x$ lying in the progression with $\gcd(a,q) = 1$.
\begin{equation*}
\pi(x; q, a) = \frac{1}{\phi(q)}{\operatorname{Li}(x)} + E(x; q, a),
\end{equation*}
where $\phi(q)$ is Euler's totient function and $E(x; q, a)$ is the error term. For every $A > 0$ there is a constant $\theta(q,A)$ that may depend on $q$ for which
\begin{equation*}
|E(x; q, a)| \le \theta(q,A)\frac{x}{(\log x)^{A}}.
\end{equation*}
This result is rather weak; however, under the assumption of the generalized Riemann hypothesis, we derive the much stronger bound
\begin{equation} \label{GRH}
|E(x; q, a)| \le \theta x^{1/2} \log x,
\end{equation}
for a positive constant $\theta$ \emph{independent} of $q$ if $q \le x$. \newline
\indent In 1965, Vinogradov \cite{vinogradov} and Bombieri \cite{bombieri} independently improved Barban's results \cite{barban} regarding Linnik's large sieve, and proved the following
\begin{theorem}[Bombieri-Vinogradov theorem]\label{BV}
For any positive constant $A$ there exists a positive constant $B = B(A)$ such that $Q = {x^{1/2}}{(\log x)^{-B}}$, with
\begin{equation} \label{bv-eq}
\sum_{q \le Q} E{^{*}}(x;q) \ll \frac{x}{(\log x)^{A}},
\end{equation}
where for a modulus $q$, the functions $E(x;q)$ and $E{^{*}}(x;q)$ are defined as
\begin{equation*}
\begin{split}
E(x;q) &=  \max_{\substack{\gcd(a,q)=1}}|E(x;q,a)| \\
E{^{*}}(x;q) &= \max_{\substack{y \le x}}E(y,q).
\end{split}
\end{equation*}
\end{theorem}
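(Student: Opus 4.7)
The plan is to follow the classical Bombieri--Vinogradov strategy, combining three ingredients: (i) orthogonality of Dirichlet characters, which rewrites $E(x;q,a)$ in terms of twisted prime-counting sums $\psi(x,\chi) = \sum_{n \le x}\chi(n)\Lambda(n)$; (ii) Vaughan's combinatorial identity, which decomposes $\Lambda(n)$ into manageable ``Type~I'' (linear) and ``Type~II'' (bilinear) pieces; and (iii) the large sieve inequality for primitive Dirichlet characters, which supplies square-root cancellation \emph{on average} over moduli $q \le Q$.

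First I would pass from $\pi(x;q,a)$ to $\psi(x;q,a) = \sum_{n \le x,\, n \equiv a \pmod{q}}\Lambda(n)$ by partial summation; the discrepancy is absorbable into the right-hand side of \eqref{bv-eq}. The orthogonality relation $\phi(q)^{-1}\sum_{\chi} \bar{\chi}(a)\chi(n) = \mathbf{1}_{n \equiv a \pmod{q}}$ gives
\begin{equation*}
\psi(x;q,a) - \frac{\psi(x)}{\phi(q)} = \frac{1}{\phi(q)}\sum_{\chi \ne \chi_0}\bar{\chi}(a)\,\psi(x,\chi),
\end{equation*}
and reducing each $\chi \bmod q$ to the primitive character $\chi^{*}$ inducing it (absorbing the discrepancy into a trivially bounded remainder) reduces \eqref{bv-eq} to an estimate of the form
\begin{equation*}
\sum_{q \le Q}\frac{q}{\phi(q)}\sum_{\chi \bmod q}^{*}\max_{y \le x}|\psi(y,\chi)| \ll \frac{x}{(\log x)^{A}},
\end{equation*}
where $\sum^{*}$ is restricted to primitive characters.

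Next I would apply Vaughan's identity at a parameter $U \le x^{1/3}$, which represents $\Lambda(n)$ on $n > U$ as a sum of four convolutions, two of Type~I (one smooth variable of length at most $U^{2}$) and two of Type~II (two variables both exceeding $U$). The Type~I contributions to $\psi(y,\chi)$ are controlled by the P\'olya--Vinogradov bound on incomplete character sums and then summed trivially in $q$. The Type~II pieces are handled by Cauchy--Schwarz followed by the large sieve inequality
\begin{equation*}
\sum_{q \le Q}\frac{q}{\phi(q)}\sum_{\chi \bmod q}^{*}\Bigl|\sum_{n \le N}a_{n}\chi(n)\Bigr|^{2} \ll (N+Q^{2})\sum_{n \le N}|a_{n}|^{2},
\end{equation*}
applied to each bilinear half. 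The factor $N+Q^{2}$ is precisely what forces the threshold $Q \le x^{1/2}$: choosing $Q = x^{1/2}(\log x)^{-B}$ balances the two contributions and yields square-root cancellation on average in $q$. The maximum over $y \le x$ is absorbed by dyadic decomposition combined with a Perron-type integral representation for $\psi(y,\chi)$; the attendant losses are polylogarithmic and can be swallowed by enlarging $B = B(A)$.

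The principal obstacle is the coordination of parameter choices: $U$ in Vaughan's identity must be small enough to keep the Type~I character sums manageable, yet large enough that both Type~II variables are short enough for the large sieve to deliver square-root savings after Cauchy--Schwarz. Propagating the $(\log x)^{-A}$ saving through the dyadic decomposition, the maximum over $y$, and the reduction from imprimitive to primitive characters without losing more than a bounded power of $\log x$ at any stage is where the bulk of the technical bookkeeping resides; the three analytic inputs (character orthogonality, Vaughan's identity, and the large sieve) are each essentially self-contained.
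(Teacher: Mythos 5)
The paper does not actually prove this statement: Theorem~\ref{BV} is quoted as a known result with citations to Bombieri and Vinogradov, and no argument is given, so there is no internal proof to compare yours against. Judged on its own terms, your outline is the standard modern proof (character orthogonality, Vaughan's identity, the multiplicative large sieve), and most of the skeleton is right: the passage from $\pi$ to $\psi$, the reduction to primitive characters, the Type~I/Type~II split, Cauchy--Schwarz plus the large sieve on the bilinear pieces, and the dyadic/integral device to absorb the maximum over $y\le x$ are all correctly placed.

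There is, however, one genuine gap: you never invoke the Siegel--Walfisz theorem, and without it the argument fails for characters of small conductor. After reducing to primitive characters, the large sieve bound $(N+Q^2)\sum_n|a_n|^2$ gives no saving when the sum over moduli is restricted to $q\le Q_0$ with $Q_0$ small (say $Q_0=(\log x)^{C}$): there the bound degenerates to essentially $N\sum_n|a_n|^2$, which for $a_n$ of $\Lambda$-type is no better than the trivial estimate for a single character. The standard remedy is to split the moduli at $Q_0=(\log x)^{C}$ and treat every primitive character of conductor at most $Q_0$ individually via Siegel--Walfisz, which gives $\psi(y,\chi)\ll y\exp(-c\sqrt{\log y})$ for such $\chi$; only the range $Q_0<q\le Q$ is handled by the large sieve machinery you describe. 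This input is not a bookkeeping detail --- it is where the ineffectivity of the implied constant in \eqref{bv-eq} enters (via Siegel's theorem), and omitting it leaves the small-conductor contribution unbounded. A secondary, smaller point: for the Type~I sums one wants partial summation against the incomplete character sum (P\'olya--Vinogradov is the right tool only for the primitive nonprincipal characters, and the $q$-summation of those bounds must still be checked against $x(\log x)^{-A}$), so the claim that they are ``summed trivially in $q$'' needs the same conductor splitting.
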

$B$ can be computed explicitly, a possible value of $B$ is $3A+23$ \cite{bombieri}; $B = 24A+46$ is also permissible \cite{sound}. From the Bombieri-Vinogradov theorem, we have on average over $q \le Q$,
\begin{equation} \label{bound1}
E(x;q) \le \theta x^{1/2}(\log x)^{B-A}.
\end{equation}
The bound given in \eqref{bound1} can sometimes be used as a substitute for the bound \eqref{GRH} that assumes the generalized Riemann hypothesis. Thus, for many situations that involve sufficiently many Dirichlet $L$-functions, and therefore require the use of the generalized Riemann hypothesis, the Bombieri-Vinogradov theorem can be employed instead. \newline
\indent Bombieri, Friedlander, and Iwaniec \cite{bometal1} \cite{bometal2} have managed to extend Theorem ~\ref{BV} beyond the $x^{1/2}$ barrier. Specifically, they proved that $Q$ can be taken more explicitly as
\begin{equation*}
Q = x^{1/2} \exp\Big\{\frac{\log x}{(\log^{(2)} x)^{B}}\Bigr\}.
\end{equation*}
\indent We say that primes have a level of distribution $\vartheta$ if \eqref{bv-eq} holds for any $A>0$ and any $\epsilon >0$ with
\begin{equation*} 
Q=x^{\vartheta -\epsilon}.
\end{equation*}
According to the Bombieri-Vinogradov theorem, primes have a level of distribution $1/2$. Elliott and Halberstam \cite{EH} extended Theorem ~\ref{BV} to conjecture that the primes have a level of distribution 1, namely,
\begin{conjecture}[Elliott-Halberstam conjecture] \label{EH}
For any positive constant $A$ and any $\epsilon > 0$ there exists a positive constant $B = B(A)$ such that $Q = x^{1-\epsilon}$, with
\begin{equation*}
\sum_{q \le Q} E{^{*}}(x;q) \ll \frac{x}{(\log x)^{A}}.
\end{equation*}
\end{conjecture}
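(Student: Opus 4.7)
The plan is to attack the Elliott--Halberstam conjecture by combining the Hardy--Littlewood--Vinogradov character decomposition with the large sieve and with post-Weil bounds on exponential sums over primes, pushing the Bombieri--Vinogradov argument past the $x^{1/2}$ barrier uniformly in the residue class. First I would expand the indicator of the progression using Dirichlet characters modulo $q$, writing, for $(a,q)=1$,
\begin{equation*}
\psi(x;q,a) - \frac{x}{\phi(q)} = \frac{1}{\phi(q)}\sum_{\chi \neq \chi_0}\bar{\chi}(a)\psi(x,\chi),
\end{equation*}
so that $E^{*}(x;q)$ is controlled by $\max_y\sum_{\chi\neq\chi_0}|\psi(y,\chi)|/\phi(q)$. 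By Perron's formula and contour shifting, $\psi(y,\chi)$ is governed by the nontrivial zeros of $L(s,\chi)$ in the critical strip, and the sum $\sum_{q\le Q}E^{*}(x;q)$ reduces to a zero-density estimate for the family $\{L(s,\chi) : \chi \text{ primitive, cond.}\le Q\}$.

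Second, I would replace the character-sum bookkeeping by Vaughan's identity (or Heath-Brown's identity) for $\Lambda(n)$, decomposing $\sum_{n\le x}\Lambda(n)\chi(n)$ into bilinear ``type-I'' sums $\sum_m a_m \sum_{n} \chi(mn) f(mn)$ and ``type-II'' sums $\sum_m a_m\sum_n b_n \chi(mn)$ with $m,n$ in dyadic ranges. Summed over $q\le Q$ and $\chi\pmod q$, the type-I sums are handled by Polya--Vinogradov and direct manipulation, while the type-II sums are exactly the input to the Bombieri--Davenport--Halberstam large sieve inequality
\begin{equation*}
\sum_{q\le Q}\frac{q}{\phi(q)}\sum_{\chi\text{ prim.}\pmod q}\Bigl|\sum_{n\le N}a_n \chi(n)\Bigr|^{2} \ll (N+Q^{2})\sum_{n\le N}|a_n|^{2}.
\end{equation*}
It is the factor $N+Q^{2}$ here, together with Cauchy--Schwarz, that forces $Q\le x^{1/2-\epsilon}$ in Theorem~\ref{BV}; any attempt at the full conjecture must break this square-root barrier in both $m$ and $n$ simultaneously.

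Third, to reach $Q=x^{1-\epsilon}$ I would graft onto this framework the dispersion method of Linnik together with Deligne's square-root cancellation for complete and incomplete Kloosterman sums, in the manner of Bombieri--Friedlander--Iwaniec. Concretely, after opening the absolute value in $E^{*}(x;q)$ and squaring, the diagonal contribution gives the conjectured main term while the off-diagonal reduces to estimates for
\begin{equation*}
\sum_{q\sim Q}\sum_{n_1,n_2\sim N}e\!\left(\frac{a\overline{n_1 n_2}}{q}\right),
\end{equation*}
which, after completion, is controlled by Weil-type bounds. I would then attempt to remove the restrictions (fixed $a$, smooth or well-factorable $q$) that appear in all known beyond-$\tfrac12$ results by introducing an additional average, perhaps via a dispersion in $a$ compatible with the maximum defining $E^{*}(x;q)$, and using Deshouillers--Iwaniec-style spectral bounds for sums of Kloosterman sums to absorb the resulting averaged error.

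The principal obstacle, and the reason the Elliott--Halberstam conjecture has resisted proof since 1965, is precisely the passage from ``averaged over $a$ and restricted $q$'' to ``maximum over $a$ and arbitrary $q\le x^{1-\epsilon}$.'' The large sieve is essentially sharp, so any gain must come from genuine cancellation in the off-diagonal Kloosterman sums; but the spectral bounds available give only $Q\le x^{1/2+\eta}$ for small absolute $\eta$, not $Q$ approaching $x$. Removing the dependence on smoothness or factorability of $q$ while simultaneously handling the maximum over $(a,q)=1$ seems to require either a GRH-strength zero-free strip for $L(s,\chi)$ uniform in $q$, or a new family of estimates for exponential sums over primes to moduli that are not amenable to the $q=rs$ factorization trick. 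I expect this to be the step at which the proof stalls, and I do not see a way past it with present technology.
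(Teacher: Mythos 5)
This statement is a \emph{conjecture}, not a theorem: the paper offers no proof of it, and in fact explicitly remarks that it lies deeper than the generalized Riemann hypothesis, that Bombieri observed nothing beyond the Bombieri--Vinogradov theorem follows even assuming GRH, and that ``not much progress has been made towards proving'' it. So there is no proof in the paper to compare yours against, and your proposal is not a proof either --- by your own admission it stalls at the decisive step. Your survey of the machinery is accurate as far as it goes: the character decomposition, Vaughan's identity, the large sieve with its $N+Q^{2}$ factor forcing $Q \le x^{1/2}$, and the dispersion-plus-Kloosterman route of Bombieri--Friedlander--Iwaniec are indeed the standard toolkit, and you correctly locate the obstruction in the passage from averaged, well-factorable moduli with fixed $a$ to the maximum over $(a,q)=1$ with arbitrary $q \le x^{1-\epsilon}$.

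The genuine gap is therefore the entire third step: ``introducing an additional average, perhaps via a dispersion in $a$'' and ``using Deshouillers--Iwaniec-style spectral bounds to absorb the resulting averaged error'' is a wish, not an argument. No estimate is stated, no exponent is tracked, and the spectral bounds you invoke are known to yield only exponents of distribution slightly above $1/2$ (and only for fixed $a$ and factorable moduli), so there is no reason to expect them to reach $x^{1-\epsilon}$ even in principle. Moreover, the paper's own discussion (following Friedlander--Granville--Hildebrand--Maier) shows the conjecture fails for $Q$ only slightly larger than $x^{1-\epsilon}$, so any purported proof must be sharp at the endpoint --- a further sign that a soft ``absorb the error'' step cannot work. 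What you have written is a competent description of why the conjecture is open, which is consistent with the paper's treatment of it as an open problem; it should not be presented as a proof proposal, and if submitted as one it fails at the step you yourself flag.
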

Friedlander, Granville, Hildebrand, and Maier \cite{fghm} have shown that for $Q$ a bit larger, namely if
\begin{equation*}
Q = x \cdot \exp\Big\{{-\frac{1}{4}}(A-1)(\log^{(2)} x)^2 (\log^{(3)} x)^{-1}\Bigr\},
\end{equation*}
for $A > 1$, then Conjecture ~\ref{EH} does not hold. In fact, it is widely believed that the Elliott-Halberstam conjecture is valid with $Q$ any fixed power $x^{\mu}$, where $0 < \mu < 1$ \cite{elliott}. \newline
\indent The Elliott-Halberstam conjecture has many startling consequences. Alford, Granville, and Pomerance \cite{agp} proved that assuming the conjecture, then for any constant $a < 1$ there exists $C > 0$ such that there are more than $x^{a}$ Carmichael numbers up to $x$, once $x > C$. Another, more recent startling consequence was proven by Goldston, Pintz, and  Y{\i}ld{\i}r{\i}m \cite{gpy} in their work on the twin prime conjecture, who showed that assuming Conjecture ~\ref{EH}, for infinitely many $n$,
\begin{equation} \label{touse}
p_{n+1} - p_{n} \le 16,
\end{equation}
where $p_{n}$ denotes the $n$-th prime. \newline
\indent We note that Conjecture ~\ref{EH} lies deeper than the generalized Riemann hypothesis, and Bombieri \cite{bombieri} remarks that nothing more precise than the Bombieri-Vinogradov theorem of Theorem ~\ref{BV} can be obtained even under the assumption of the generalized Riemann hypothesis. Hence, it is of little surprise that not much progress has been made towards proving the Elliott-Halberstam conjecture. \newline
\indent Moreover, as the reader may recall, we mentioned that determining a decidable property is not immediately evident in the case of the Riemann hypothesis. This is even more true with the Elliott-Halberstam conjecture, and for good reason. The author of this article had the idea to use \eqref{touse} of Goldston, Pintz, and Y{\i}ldr{\i}m \cite{gpy} to formulate a decidable property. To explain in more detail, \eqref{touse} can be expressed in terms of the solutions of the Pell equation
\begin{equation} \label{pell}
x^2 - (a^2 - 1)y^2 = 1,
\end{equation}
for $a > 0$. This can be shown to be possible by first letting the solutions of \eqref{pell} in order of the size of $y$ be denoted as
\begin{equation*}
x = \chi_{a}(n), \quad y = \psi_{a}(n),
\end{equation*}
and by utilizing the result from Theorem 419 of Hardy and Wright \cite{intro}
\begin{equation*}
\lfloor \theta_1 \cdot 10^{2^{n}} \rfloor = 10^{2^{n}} \sum_{k=1}^{n}p_k 10^{{-2}^{k}},
\end{equation*}
\begin{equation*}
\lfloor \theta_1 \cdot 10^{2^{n-1}} \rfloor = 10^{2^{n-1}} \sum_{k=1}^{n-1}p_k 10^{{-2}^{k}},
\end{equation*}
and
\begin{equation*}
\lfloor \theta_1 \cdot 10^{2^{n+1}} \rfloor = 10^{2^{n+1}} \sum_{k=1}^{n+1}p_k 10^{{-2}^{k}}.
\end{equation*}
Giving us,
\begin{equation} \label{p-n}
p_n = \lfloor \theta_1 \cdot 10^{2^{n}} \rfloor - 10^{2^{n-1}} \lfloor \theta_1 \cdot 10^{2^{n-1}} \rfloor,
\end{equation}
and
\begin{equation} \label{p-n+1}
p_{n+1} = \lfloor \theta_1 \cdot 10^{2^{n+1}} \rfloor -  10^{2^{n}} \lfloor \theta_1 \cdot 10^{2^{n}} \rfloor.
\end{equation}
Although \eqref{p-n} is not practical for strictly evaluating $p_{n}$ (since the constant $\theta_1 \approx 0.0203000500000007$ should be evaluated to $2^{n}$ decimal places, which can only be done if the values of $p_1,\ldots, p_n$ are known), its form is useful in expressing the $n$-th prime in terms of $\psi_{a}(n)$, which Matiyasevich and Robinson \cite{mat-rob1975} prove is diophantine. Unfortunately, apart from the issue of including the real number $\theta_1$ to formulate a decidable property involving \eqref{touse}, \eqref{touse} only is \emph{implied by} but is not equivalent to the Elliott-Halberstam conjecture. It turns out, however, that the task of finding a statement equivalent to the Elliott-Halberstam conjecture (let alone proving that this statement can be translated into the solvability or unsolvability of a particular diophantine equation) is incredibly nontrivial. Not to mention given that the translation of an open problem to a particular diophantine equation does not provide much insight into solving the problem, such efforts into finding a decidable property for the Elliott-Halberstam conjecture can appropriately be viewed as fruitless. However, this certainly does not preclude the possibility that a breakthrough in the study of diophantine equations may provide insight into showing that a class of equations (including perhaps an equation of interest in that class that is equivalent to the truth of an important open problem) is unsolvable. One can only anticipate such a result.
\newpage
\bibliographystyle{amsplain}

\begin{thebibliography}{13}
\bibitem[1]{adleman-manders}
L. Adleman and K. Manders. ``Diophantine complexity''. \emph{FOCS} (1976): 81-88.
\bibitem[2]{agp}
W. R. Alford, A. Granville, and C. Pomerance. ``There are infinitely many Carmichael numbers''. \emph{Ann. Math.} \textbf{140} (1990): 703-722.
\bibitem[3]{barban}
M. B. Barban. ``New applications of the large sieve of Yu. V. Linnik''. \emph{Akad. Nauk. UzSSR Trudy. Inst. Mat.} \textbf{22} (1961): 1-20.
\bibitem[4]{bombieri}
E. Bombieri. ``On the large sieve''. \emph{Mathematika} \textbf{12} (1965): 201-225.
\bibitem[5]{bometal1}
E. Bombieri, J. B. Friedlander, and H. Iwaniec. ``Primes in arithmetic progressions to large moduli''. \emph{Acta Math.} \textbf{156} (1986): 203-251.
\bibitem[6]{bometal2}
E. Bombieri, J. B. Friedlander, and H. Iwaniec. ``Primes in arithmetic progressions to large moduli, II''. \emph{Math. Ann.} \textbf{277} (3) (1987): 361-393.
\bibitem[7]{bombieri2}
E. Bombieri. ``The Riemann hypothesis - official problem description''. Clay Mathematics Institute. \url{http://www.claymath.org/millennium/Riemann_Hypothesis/riemann.pdf}.
\bibitem[8]{church}
A. Church. ``An unsolvable problem of elementary number theory''. \emph{Amer. J. Math.} \textbf{58} (1936): 345-363.
\bibitem[9]{davis}
M. Davis (editor). \emph{The undecidable; basic papers on undecidable propositions, unsolvable problems and computable functions}. Hewlett, New York: Raven Press, 1965.
\bibitem[10]{dpr1961}
M. Davis, H. Putnam, and J. Robinson. ``The decision problem for exponential diophantine equations''. \emph{Ann. Math.} \textbf{47} (1961): 425-436.
\bibitem[11]{dmr1976}
M. Davis, Ju. V. Matijasevi\v{c}, and J. Robinson. ``Hilbert's tenth problem. Diophantine equations: positive aspects of a negative solution''. Mathematical Developments Arising from Hilbert Problems. F. E. Browder (editor). \emph{Proc. Sympos. Pure Math.} Vol. 28, American Mathematical Society, Providence, Rhode Island, 1976: 323-378.
\bibitem[12]{elliott}
P. D. T. A Elliott. \emph{Duality in analytic number theory}. Cambridge University Press, New York, 1997.
\bibitem[13]{EH}
P. D. T. A Elliott and H. Halberstam. ``A conjecture in prime number theory''. \emph{Symp. Math.} \textbf{4} (1968-1969): 59-72.
\bibitem[14]{fghm}
J. B. Friedlander, A. Granville, A. Hildebrand, and H. Maier. ``Oscillation theorems for primes in arithmetic progressions and for sifting functions''. \emph{J. Amer. Math. Soc.} \textbf{4} (1991): 25-86.
\bibitem[15]{godel1931}
K. G\"{o}del. ``\"{U}ber formal unentscheidbare S\"{a}tze der \emph{Principia Mathematica} und verwandter Systeme, I''. \emph{Monatsh. Math.} \textbf{38} (1931): 173-198. English translation in: K. G\"{o}del. \emph{On formally undecidable propositions of Principia Mathematica and related systems}. Translated by B. Meltzer. New York: Basic Books, 1962.
\bibitem[16]{godel1934}
K. G\"{o}del. ``On undecidable propositions of formal mathematical propositions''. In \cite{davis}: 39-74. Originally in the form of notes taken by Kleene and Rosser on lectures given by G\"{o}del at the Institute for Advanced Study during the spring of 1934.
\bibitem[17]{gpy}
D. A. Goldston, J. Pintz, and C. Y. Y{\i}ld{\i}r{\i}m. ``Primes in tuples, I''. \emph{Ann. Math.} \textbf{170} (2009): 819-862.
\bibitem[18]{intro}
G. H. Hardy and E. M. Wright. \emph{An introduction to the theory of numbers}. 5th ed., Clarendon Press, Oxford, 1979.
\bibitem[19]{herbrand1931}
J. Herbrand. ``Sur la non-contradiction de l'arithm\'{e}tique''. \emph{Journal f\"{u}r die reine und angewandte Mathematik} \textbf{166} (1931): 1-8.
\bibitem[20]{hilbert}
D. Hilbert. ``Mathematical problems''. \emph{Bull. Amer. Math. Soc.} \textbf{8} (1902): 437-479.
\bibitem[21]{ingham}
A. E. Ingham. \emph{The distribution of prime numbers}. Cambridge: Cambridge University Press, 1932.
\bibitem[22]{jackson}
A. Jackson. ``Interview with Martin Davis''. \emph{Notices Amer. Math. Soc.} \textbf{55} (2008): 560-571.
\bibitem[23]{mat1970}
Ju. V. Matijasevi\v{c}. ``Diofantovost' perechislimykh mnozhestv''. \emph{Dokl. Akad. Nauk SSSR} \textbf{191} (1970): 279-282. English translation in: Ju. V. Matijasevi\v{c}. ``Enumerable sets are diophantine''. \emph{Soviet Math. Dokl.} \textbf{11} (1970): 354-358.
\bibitem[24]{mat-rob1975}
Ju. V. Matijasevi\v{c} and J. Robinson. ``Reduction of an arbitrary diophantine equation to one in 13 unknowns''. \emph{Acta Arith.} \textbf{27} (1975): 521-553.
\bibitem[25]{mat-book}
Yu. V. Matiyasevich. \emph{Hilbert's tenth problem}. Cambridge, Massachussetts: MIT Press, 1993.
\bibitem[26]{mazur-rubin}
B. Mazur and K. Rubin. ``Ranks of twists of elliptic curves and Hilbert's tenth problem''. \emph{Invent. Math.} \textbf{181} (2010): 541-575.
\bibitem[27]{poonen}
B. Poonen. ``Using elliptic curves of rank one towards the undecidability of Hilbert's tenth problem over rings of algebraic integers''. Algorithmic Number Theory (Sydney, 2002), \emph{Lecture Notes in Computer Science} \textbf{2369}, Berlin: Springer-Verlag (2002): 33-42.
\bibitem[28]{post}
E. Post. ``Finite combinatory processes. Formulation I''. \emph{J. Symbolic Logic} \textbf{1} (1936): 103-105.
\bibitem[29]{riemann}
B. Riemann. ``Ueber die Anzahl der Primzahlen unter einer gegebenen Gr\"{o}see''. \emph{Monat. der K\"{o}nigl. Preuss. Akad. der Wissen. zu Berlin aus der Jahre} (1859): 671-680.
\bibitem[30]{schoenfeld}
L. Schoenfeld. ``Sharper bounds for the Chebyshev functions $\theta(x)$ and $\psi(x)$. II''. \emph{Math. Comp.} \textbf{30} (1976): 337-360.
\bibitem[31]{s-2011}
A. Shlapentokh. ``Defining integers''. \emph{Bull. Symbolic Logic} \textbf{17} (2011): 230-251.
\bibitem[32]{sound}
K. Soundararajan. ``Small gaps between prime numbers: The work of Goldston-Pintz-Y{\i}ld{\i}r{\i}m''. \emph{Bull. Amer. Math. Soc.} \textbf{44} (2007): 1-18.
\bibitem[33]{turing}
A. M. Turing. ``On computable numbers, with an application to the \emph{Entscheidungsproblem}''. \emph{Proc. Lond. Math. Soc.} \textbf{42} (1936-1937): 230-265. ``On computable numbers, with an application to the \emph{Entscheidungsproblem}. A correction''. \emph{Proc. Lond. Math. Soc.} \textbf{43} (1938): 544-546.
\bibitem[34]{vinogradov}
A. I. Vinogradov. ``The density hypothesis for Dirichlet $L$-series''. \emph{Izv. Akad. Nauk. SSSR Set. Mat.} \textbf{29} (1965): 903-934; ``Corrigendum''. ibid. \textbf{30} (1966): 719-720.
\bibitem[35]{koch}
H. von Koch. ``Sur la distribution des nombres premiers''. \emph{Acta Math.} \textbf{24} (1901): 159-182.
\end{thebibliography}

\end{document}